\def\BibTeX{{\rm B\kern-.05em{\sc i\kern-.025em b}\kern-.08em
    T\kern-.1667em\lower.7ex\hbox{E}\kern-.125emX}}
\algnewcommand\algorithmicforeach{\textbf{for each}}
\newtheorem{theorem}{Theorem}
\newtheorem{corollary}{Corollary}
\newtheorem{lemma}{Lemma}
\newtheorem{definition}{Definition}
\newtheorem{assumption}{Assumption}
\DeclareMathOperator*{\argmin}{arg\,min}
\newcommand{\squeezeup}{\vspace{-2.5mm}}
\begin{document}

\title{A Probabilistic Guidance Approach to Swarm-to-Swarm Engagement Problem}


\author{%
  Samet Uzun%
  \rlap{\textsuperscript{1}},
  Nazım Kemal Üre %
  \rlap{\textsuperscript{1}}
}

\maketitle

\footnotetext[1]{Istanbul Technical University, Department of Aeronautics and Astronautics, Istanbul, 34469, Turkey \{uzunsame, ure\}@itu.edu.tr}

\begin{abstract}
This paper introduces a probabilistic guidance approach for the swarm-to-swarm engagement problem. The idea is based on driving the controlled swarm towards an adversary swarm, where the adversary swarm aims to converge to a stationary distribution that corresponds to a defended base location. The probabilistic approach is based on designing a Markov chain for the distribution of the swarm to converge a stationary distribution. This approach is decentralized, so each agent can propagate its position independently of other agents. Our main contribution is the formulation of the swarm-to-swarm engagement as an optimization problem where the population of each swarm decays with each engagement and determining a desired distribution for the controlled swarm to converge time-varying distribution and eliminate agents of the adversary swarm until adversary swarm enters the defended base location. We demonstrate the validity of proposed approach on several swarm engagement scenarios.
\end{abstract}

\begin{IEEEkeywords}
Swarm-to-swarm engagement, decentralized control, probabilistic methods, Markov processes.
\end{IEEEkeywords}

\todototoc

\section{Introduction} \label{sec:intro}

In nature, some challenging tasks can be completed more robustly and efficiently by using a swarm with a large number of small agents rather than using a few large ones. Animal communities such as bees or ants can aggregate together and exhibit collective behaviors for gathering food or avoiding a threat. Hence, they can improve their long term survival chances. These natural display of swarm behaviours is attempted to be applied to engineered multi-agent systems by many scientists. Efficient swarm guidance algorithms are one of the most important parts to be designed to implement such systems.

\begin{figure}[!hbt]
    \centering
    \includegraphics[width=0.4\textwidth]{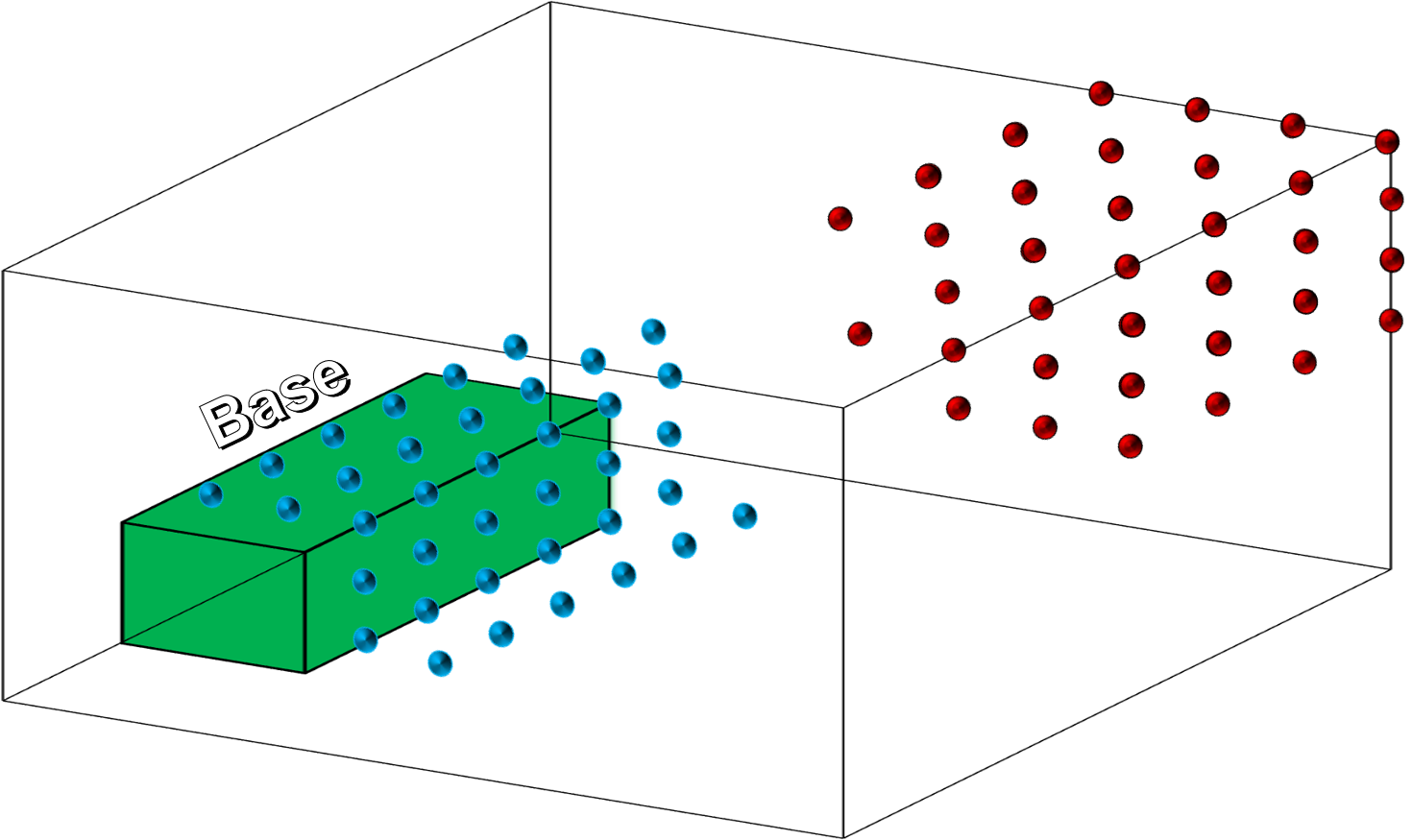}
    \caption{Representation of the base location, controlled (blue) swarm, and adversary (red) swarm. While the red swarm tries to converge to the stationary distribution of the base location, the blue swarm tries to engage with the time-varying distribution of the red swarm.}
    \label{fig:S2S}
\end{figure}

This paper introduces a probabilistic guidance approach for the swarm-to-swarm engagement problem. To the best of our knowledge, this work is the first approach to swarm-to-swarm engagement problem. The idea is based on driving a controlled swarm towards an adversary swarm to eliminate agents of the adversary swarm, where the adversary swarm aims to converge to a stationary distribution that corresponds to a defended base location. Proposed probabilistic swarm guidance approaches in the earlier literature work under the assumption that a swarm aims to converge to a stationary distribution. Hence, these methods can not be directly applied to the swarm-to-swarm engagement problem, where the controlled swarm should converge to the time-varying distribution of the adversary swarm to eliminate agents of the adversary swarm. The situation is illustrated in Figure \ref{fig:S2S}, where there are a controlled (blue) swarm, an adversarial (red) swarm, and a stationary distribution, which corresponds to the defended base location. While the red swarm acts according to their pre-determined Markov chain to converge the stationary distribution of the base location, blue swarm designs a Markov chain to engage and eliminate agents of the red swarm. In this paper, we propose a method that determines a stationary distribution for the blue swarm to engage with the red swarm. The method is based on to converge projection of the distribution of the red swarm respect to the Markov chain of the red swarm. The number of agents of the red swarm that enter the defended base location can be bounded in this method. The strategy of the blue swarm is to eliminate desired number of agents of the red swarm furthest from the defended base location. 

A scheme for the swarm-to-swarm engagement is given in Figure \ref{fig:Thesis_Schema_1}. Red swarm propagates with the probabilistic guidance algorithm using their pre-determined Markov chain to converge defended base location. Blue swarm determines the stationary distribution to engage with the red swarm and synthesize a Markov matrix to converge to the determined their distribution. Then, the blue swarm also propagates with the probabilistic guidance algorithm. After both swarms are propagated, agents are eliminated with respect to engagement dynamics. The same processes continue with the new distributions of the blue and red swarms.

\begin{figure}[!hbt]
\centering
\includegraphics[width=0.5\textwidth]{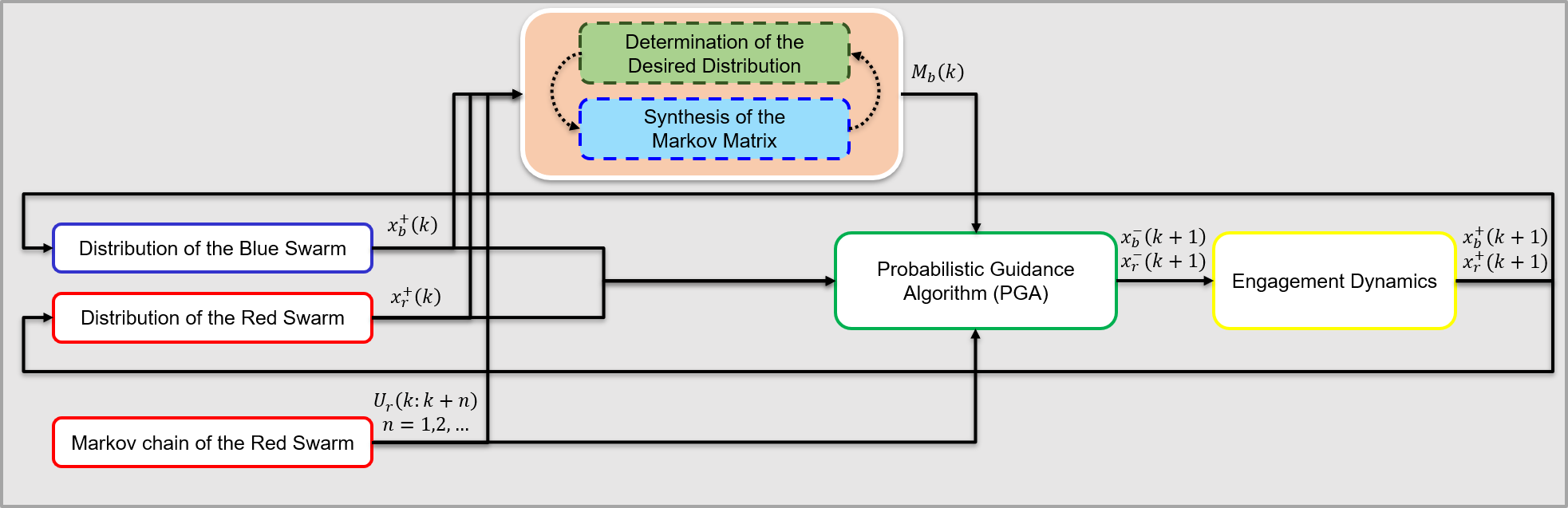}
\caption{Swarm-to-swarm engagement scheme.}
\label{fig:Thesis_Schema_1}
\end{figure}

Swarm engagement has the potential to be one of the key aspects of the future military scene. For instance, DARPA's OFFSET program \cite{DARPA} focuses on the development of rapid generation of tactics for offensive swarms. Since the conventional missile defense systems are not cost-effective for defending against such adversary swarm attacks, we believe that employing a defensive swarm and utilizing swarm-to-swarm engagement algorithms can provide a much more effective solution against such attacks.

\subsection{Related Works} \label{sec:related_work}
In earlier literature, many deterministic methods are developed for the guidance of swarms containing up to $10-20$ agents. However, these methods are computationally infeasible for the systems that contain agents ranging from hundreds to tens of thousands \cite{lumelsky1997decentralized, richards2002spacecraft, tillerson2002co, scharf2003survey, kim2004multiple, ramirez2010distributed}. In order to handle this scalability issue, density based deterministic and probabilistic swarm guidance methods are developed for both discrete and continuous state spaces.

For continuous state spaces, several probabilistic and deterministic swarm guidance methods are developed. Density distribution of the swarm agents is probabilistically controlled to converge a pre-determined distribution by a diffusing swarm of robots that take local measurements of an underlying scalar field in \cite{elamvazhuthi2016coverage}. In \cite{zhao2011density}, an example of deterministic density control that is inspired by Smoothed Particle Hydrodynamic (SPH) is illustrated for group motion and segregation. A deterministic velocity field method is presented to drive the swarm of robots smoothly to the desired density distribution in \cite{eren2017velocity}.

There are also probabilistic methods that consider the swarm as a statistical ensemble and treat the guidance problem as convergence to a desired swarm density distribution in a discrete state space.

In \cite{accikmecse2012markov}, Metropolis-Hastings algorithm is used to design a Markov chain for guidance of swarm agents to the desired swarm density distribution in discrete state space. 

Modeling objective function and constraints as linear matrix inequalities (LMI), convex optimization techniques are used to design the corresponding Markov chain in \cite{accikmecse2015markov}. Then, in order to increase the convergence rate and satisfy some safety constraints, convex optimization approach is extended using density feedback in \cite{demir2014density} and \cite{demir2015decentralized}, respectively. On the other hand, none of these methods considers the number of transitions of the agents. In order to minimize the number of transitions of the agents, swarm guidance is formulated as an optimal transport problem in \cite{bandyopadhyay2014probabilistic}. However, the computation time of the algorithm increases rapidly with increasing the dimension of the state space and performance of the algorithm drops dramatically with estimation errors of the current swarm distribution. An efficient time-inhomogeneous Markov chain approach is proposed for probabilistic swarm guidance (PSG-IMC algorithm) to minimize the number of transitions of the agents in \cite{bandyopadhyay2017probabilistic}. 

However, communication between all bins has to be established for all these mentioned feedback based methods to handle global current distribution for feedback. Thus, this method is modified to work with local information in \cite{jang2018local}. In this method, a local-information based probabilistic guidance method is designed on the time-inhomogeneous Markov chain approach proposed in \cite{bandyopadhyay2017probabilistic}. However, in both global and local-information based time-inhomogeneous Markov chain approaches, convergence rate of the swarm distribution decreases sharply as transition constraints of the agents increases.

In order to converge to the desired steady-state distribution at high convergence rate respect to earlier methods via minimal number of state transitions to minimize resource usage, decentralized state-dependent Markov chain synthesis method is proposed in \cite{uzun2021-1}. In mentioned earlier methods \cite{accikmecse2012markov, accikmecse2015markov, bandyopadhyay2017probabilistic, jang2018local, uzun2021-1}, a shortest path algorithm is used for agents to propagate them from zero density states of desired distribution, which is named as transient states, to non-zero density states of desired distribution which is named as recurrent states. In \cite{uzun2021-2}, a density diffusion based method is proposed for transient states of the desired distribution to increase the convergence rate of the swarm distribution. Also, in all these methods \cite{accikmecse2012markov, accikmecse2015markov, bandyopadhyay2017probabilistic, jang2018local, uzun2021-1, uzun2021-2}, it is assumed that recurrent states of the desired distribution are strongly connected among themselves which means transition between all recurrent states without using transient states of the desired distribution is possible. In \cite{uzun2021-3}, a method is proposed to converge to the desired distributions with disconnected parts. This method allows multiple Markov chain to be synthesized for the each strongly connected part of the recurrent states of the desired distribution and to combine these Markov chains to converge desired distribution with disconnected parts.


\subsection{Main Contributions}
In this work, we develop a problem formulation and propose an efficient algorithm to solve the swarm-to-swarm engagement problem. We introduce the objectives and engagement dynamics of the controlled and adversary swarms. Then, we propose a method that deal with the time-varying distribution of the adversary swarm. The method is based on to converge projection of the red swarm on any determined boundary bins. We proved that the expected number of red agents that enter the defended base location can be bounded under some mild assumptions. We propose an algorithm to maximize the distance between the defended base location and boundary bins as keeping the ratio of red agents that enter the defended base location under a pre-determined ratio.

\subsection{Organization}
The paper is organized as follows. Section \ref{sec:background} introduces the probabilistic guidance problem formulation and 
our Markov chain synthesis method.
Section \ref{sec:ssep} introduces the objectives and engagement dynamics of swarms and presents an algorithm to eliminate desired number of agents of the adversary swarm until adversary swarm enters the defended base location. Section \ref{sec:results} presents numerical results across different swarm engagement scenarios.

\textit{Notation:}
In this paper, the time index is donated by a right subscript and the agent index are donated as lower-case right superscript. $\bm{0}$ and $\bm{1}$ are zero matrix and matrix of ones in appropriate dimensions. $V \setminus W$ is the elements in set $V$ that are not in set $W$. $\mathcal{P}$ denotes probability of a random variable. $M>(\geq)0$ implies that $M$ is a positive (non-negative) matrix. $e_i$ is a vector in appropriate dimensions with its $i$-th element is $+1$ and its other entries are zeros. $\sigma (A)$ is the set of eigenvalues $\lambda$ of $A$, $\rho(A)$ is the spectral radius of $A$ ($ \max_{\lambda \in \sigma(A)} |A|$). $\odot$ represents the Hadamard (Schur) product. Upper-case right superscript '$-$' and '$+$' imply the prior and posterior form of a scalar or vector for an event. $\left\| x \right\|_1 = \sum_{i} |x[i]|$ denotes the $L_1$ norm of vector $x$. $(v_1, v_2, ... , v_n)$ represents a vector, such that $(v_1, v_2, ... , v_n) \equiv [v_1^T, v_2^T, ... , v_n^T]^T$ where $v_i$ have arbitrary dimensions. $diag(A) = (A[1,1], A[2,2], ... , A[n,n])$ for $A \in R^{nxn}$. 

\section{Background} \label{sec:background}

Most of the underlying definitions and baseline algorithms are based on \cite{accikmecse2012markov}, in this section, we briefly review this material for completeness.

\subsection{Swarm Distribution Guidance Problem} \label{sec:sdgp}

\begin{definition}
\textit{(Bins)} The operational region, which the swarm agents are distributed, is denoted as $\mathcal{R}$. The region is assumed to be partitioned as the union of $m$ disjoint regions, which are referred to as bins $R_i$, $i = 1,...,m$, such that  $\mathcal{R} = \cup_{i=1}^{m} R_i$, and $R_i \cap R_j = \varnothing$ for $i \neq j$.
\end{definition}

\begin{definition}
\textit{(Density distribution of the swarm)} Let an agent have position $r(k)$ at time index $k \in Z^+$. Let $x(k)$ be a vector of probabilities, $\bm{1}^T x(k) = 1$, such that the $i$-th element $x[i](k)$ is the probability of the event that this agent will be in bin $R_i$ at time $k$. Consider a swarm comprised of $N$ agents. Each agent is assumed to act independently of the other agents, so that the following equation holds for $N$ separate events,
\begin{equation} \label{eq:x(k)}
    x[i](k) := \mathcal{P}(r_l(k) \in R_i), \quad l = 1,...,N,\\
\end{equation} 
where $r_l(k)$ denotes the position of the $l$’th agent at time index $k$, and the probabilities of these $N$ events are jointly statistically independent. We refer to $x(k)$ as the density distribution of the swarm.
\end{definition}

Let density distribution of a swarm at time $k$ is donated as $x(k)$ and the swarm is comprised of $N$ agents. Distribution of the number of agents of the swarm at time $k$ is donated as $s(k)$ and defined with the following equation,
\begin{equation}
    s(k) = N x(k).
\end{equation}

\begin{definition} \label{def:lim_to_v}
\textit{(Desired steady-state distribution)} It is desired to guide the agents to a specified steady-state distribution described by a probability vector $v \in \mathbb{R}^m$,
\begin{equation} \label{eq:lim_to_v}
    \lim_{k\to\infty} x(k) = v.\\
\end{equation} 
\end{definition}

States of the desired distribution can be classified as in the following definition.
\begin{definition} \label{def:recur_trans}
\textit{(Recurrent and transient states)} The states that have non-zero elements in the desired distribution $v$ are called recurrent states. The other states with zero elements in the desired distribution $v$ are called transient states.
\end{definition}

The main idea of the probabilistic guidance is to drive the propagation of probability vector $x$, instead of individual agent positions $\{r_l(k)\}_{l=1}^N$. Swarms are formed as a statistical ensemble of agents to facilitate the guidance of swarm problem. Although the distribution of swarm agent positions $n/N$ is usually different from $x$ numerically, it is equal to $x$ on the average. Using the law of large numbers, $x$ can be made arbitrarily close to $n/N$ as the number of agents increases.

\subsection{Decentralized Probabilistic Swarm Guidance} \label{sec:pga}
\subsubsection{Probabilistic Guidance Algorithm} 

\begin{assumption} \label{asm:agents_capability}
\textit{(Agent's capability)} All agents can determine their current bins to use their stochastic policy for the transition.
\end{assumption}

\begin{definition}
\textit{(Stochastic policy)}
All swarm agents is propagated at time $k$ with a column stochastic matrix $M(k) \in \mathbb{R}^{m \times m}$ that is called as Markov matrix \cite{horn2012matrix}. Then, $M(k)$ has to satisfy
\begin{equation} \label{eq:desiredss}
    \bm{1}^T M(k) = \bm{1}^T, \quad M(k) \geq \bm{0} \\. 
\end{equation}

The entries of matrix $M(k)$ are defined as transition probabilities. Specifically, for any $k \in N_+$ and $i, j = 1, . . . , m$,
\begin{equation}
    M[i,j](k) = e_i^T M(k) e_j = \mathcal{P}(r(k+1) \in R_i | r(k) \in R_j ). \\
\end{equation} 
\textit{i.e.}, 
an agent in bin $j$ transitions to bin $i$ with probability $M[i, j](k)$. 
\end{definition}

The constraints $M(k) \geq \bm{0}$ and $\bm{1}^T M(k) = \bm{1}^T$ simply implies that the probability of moving from one bin to another bin is nonnegative and the sum of probabilities of transition from a given bin to another bin is equal to $1.0$. The Markov matrix is supplied each of the agents to propagate their position, which is only depending on their own states.

The matrix $M(k)$ determines the time evolution of the probability vector $x$ as
\begin{align} \label{eq:xt1mxt}
\begin{split}
    x(k+1) = M(k)x(k), \quad k = 0,1,2,... \\
    \text{\: with \:} x(0) \geq \bm{0} \text{\: and \:} \bm{1}^Tx(0) = 1.
\end{split}
\end{align}

Time evolution of the probability vector $x$ from any time $k$ to any time $k+n$ can be written with forward matrix product as,
\begin{align}
    x(k + n) &= M(k + n - 1) ... M(k + 1) M(k) x(k)\\
    x(k + n) &= U(k:k+n) x(k).
\end{align}

\begin{algorithm}[H]
    \begin{enumerate}
      \item Each agent determines its current bin, e.g., $r_l(k) \in R_i$.
      \item Each agent generates a random number $z$ that is uniformly distributed in [0, 1].
      \item Each agent transitions to bin $j$, \textit{i.e.}, $r_l(k + 1) \in R_j$, if
        $
      \begin{cases}
          $$ \sum_{s=1}^{j-1} M[s,i](k) \leq z \leq \sum_{s=1}^{j} M[s,i](k)$$.   \\
      \end{cases}
        $
    \end{enumerate}
    \caption{Probabilistic Guidance Algorithm}
\label{alg:pga}
\end{algorithm}
The probabilistic guidance algorithm is given in the Algorithm \ref{alg:pga}. The first step of the algorithm is to determine the agent’s current bin. In the following steps, each agent samples from a discrete distribution and transitions to another bin depending on the column of the Markov matrix, which is determined by the agent’s current bin.

\subsubsection{Convergence to Desired Steady-State Distribution} \label{sec:cdssd}

Assume that, desired distribution of the swarm is donated by the vector $v$. The main idea of the probabilistic guidance law is to synthesize a Markov matrix that satisfies the condition given in the Definition \ref{def:lim_to_v}. 


\subsubsection{Transition Constraints}
The transition between two bins is represented by an edge of a directed graph, where the adjacency matrix of the directed graph is defined similar to the second section of \cite{mesbahi2010graph}.
\begin{definition} \label{def:motcons}
\textit{(Transition Constraints)} Adjacency matrix is used to restrict the allowable transitions of the agents. $A_a[i,j] = 1$ if the transition from bin $i$ to bin $j$ is allowable, and is zero otherwise. Transition constraint of the agents are restricted with the following inequality:
\begin{equation} \label{eq:motcons}
    (\bm{1} \bm{1}^T - A_a^T) \odot M = \bm{0}.\\
\end{equation}
\end{definition}

\begin{assumption} \label{asm:strongly_connected}
    (\textit{Strongly Connected}) It is assumed that all bins is strongly connected by the adjacency matrix which means there exists a directed path between all bins, or equivalently, $(I + A_{a})^{m-1} > \bm{0}$ for $A_{a} \in R^{m \times m}$ \cite[section 6.2.19]{horn2012matrix}. Then, there exist a directed path between all bins.
\end{assumption}

\begin{definition}
    (\textit{Neighbor Bins}) All bins $j$ that satisfy the condition $A_a[j,i] = 1$ are neighbor bins of the bin $i$.
\end{definition}

\subsection{Synthesis of the Markov Matrix} \label{sec:somm}
Because of the convergence rate performance via minimal number of transitions, decentralized state-dependent Markov chain synthesis method proposed in \cite{uzun2021-1} is used to synthesize the part of the Markov matrices related to the recurrent states. The other part of the Markov matrices related to the transient states is synthesized using density diffusion based method proposed in \cite{uzun2021-2} again due to the high convergence rate. In addition, the strategy proposed in \cite{uzun2021-3} is used for the case that the desired distribution have disconnected parts.

\section{Swarm-to-swarm Engagement Problem Formulation and Proposed Engagement Solution}\label{sec:ssep}

\subsection{Introducing the Objectives and Dynamics of the Swarms}
\begin{definition}
\textit{(Blue and red swarms)} There are two types of swarms in the swarm-to-swarm engagement problem, which are controlled (blue) and adversarial (red) swarms. Density distributions, Markov matrices, forward matrix products of Markov matrices, distribution of number of agents and total number of agents for the blue and red swarms are donated as right subscripts '$b$' and '$r$' (i.e. $x_b$, $x_r$, $M_b$, $M_r$, $U_b$, $U_r$, $s_b$, $s_r$, $N_b$ and $N_r$.)
\end{definition}

\begin{definition}
\textit{(Elimination dynamics)} The agents, which are in the same bins and belong to different swarms, eliminate each other in equal numbers. Only the remaining agents that are included in the swarm with the higher number of agents survive in these bins. The number of agents and swarm distributions are donated with a superscript '$-$' before the elimination process and donated with a superscript '$+$' after the elimination process. During the elimination process, distribution of the number of blue and red swarms is changing with the following equations,
\begin{equation} \label{eq:xbre_1}
\begin{aligned}
    s_b^+(k) = \max \bigg( \Big( s_b^-&(k) - s_r^-(k) \Big), \bm{0} \bigg),\\
    s_r^+(k) = \max \bigg( \Big( s_r^-&(k) - s_b^-(k) \Big), \bm{0} \bigg).\\
\end{aligned}
\end{equation}
\end{definition}

Hence, the total number of agents and distributions of the swarms are reconfigured as in the following equations,
\begin{equation} \label{eq:xbre_2}
\begin{aligned}
    N_b^+(k) =& \bm{1}^T s_b^+(k),\\
    N_r^+(k) =& \bm{1}^T s_r^+(k),\\
    x_b^+(k) =& \frac{s_b^+(k)}{N_b^+(k)},\\
    x_r^+(k) =& \frac{s_r^+(k)}{N_r^+(k)}.\\
\end{aligned}
\end{equation}

\begin{definition}
\textit{(Transition dynamics of the blue and red swarms)} Each swarm synthesizes a Markov matrix using the method, which is discussed in Section \ref{sec:somm}, to converge their specified distributions. The time evaluations of the density distributions of the swarm with these Markov matrices are given as 
\begin{equation}  \label{eq:timeeval}
\begin{aligned}
    x_b^-(k+1) =& M_b(k) x_b^+(k),\\
    x_r^-(k+1) =& M_r(k) x_r^+(k).\\
\end{aligned}
\end{equation}
\end{definition}

\begin{definition} \label{def:lim_4_b}
\textit{(Desired distribution for the red swarm)} The purpose of the red swarm is to converge to the desired steady-state distribution, which corresponds to the base location and denoted as $v_b$,
The desired change of the distributions of the red swarm for any time-step $k$ are formulated as
\begin{equation} \label{eq:br2rb}
\begin{aligned}
    \lim_{n\to\infty} U_r(k:k+n)x_r^+(k) =& v_b.\\
\end{aligned}
\end{equation}
\end{definition}

\subsection{Determination of the Stationary Distribution for the Blue Swarm} \label{sec:doct}

The purpose of the red swarm is to converge to the base location as given in the Eq. (\ref{eq:br2rb}). The purpose of the blue swarm is to converge to the time-varying distribution and eliminate desired number of agents of the red swarm while the red swarm furthest from the defended base location. Hence, blue swarm has to know the distribution of the red swarm and should have knowledge about the Markov chain of the red swarm to create a strategy to engage with the red swarm. These knowledge can be extended to estimate the Markov chain of the red swarm for future studies.

\begin{assumption}
The Markov chain and the current distribution of the red swarm are available to blue swarm.
\end{assumption}

The strategy is to select boundary bins and converge to the projection of the distribution of red swarm on these boundary bins with respect to the Markov chain of the red swarm. The number of agents of the red swarm that enter the base location can be bounded with this strategy which is given in the Theorem \ref{thm:elim}.

The strategy is shown via an example on an operational region with high number of bins in Figure \ref{fig:Second_Strategy}. On the left side of the Figure \ref{fig:Second_Strategy}, suppose that the red dots are the path to be revealed by the Markov chain of the red swarm to converge to the base location, the shaded line represents the boundary bins determined by the blue swarm. As will be shown in the Lemma \ref{lem:mrs}, we can estimate the distribution of the total density of red swarm that enters the boundary bins\footnote{The distribution of the total density of red swarm that enters the boundary bins may be slightly different than estimated since the stochasticity of the Markov chain of the red swarm.}. The distribution of the total density of red swarm that enters the boundary bins is represented with a red distribution on the right side of Figure \ref{fig:Second_Strategy}. According to elimination dynamics given in the Eq. (\ref{eq:xbre_1}), blue swarm can eliminate all agents of the red swarm as convergence to the distribution of the total density of red swarm that enters the boundary bins. As shown in the Theorem \ref{thm:elim}, the maximum number of red agents that   enter the base location without being eliminated can be estimated for any determined boundary bins. Hence, the distance between the base location and boundary bins can be maximized as keeping the number of red agents that enter the base location under a desired number.

\begin{figure}[H]
\centering
\includegraphics[width=0.30\textwidth]{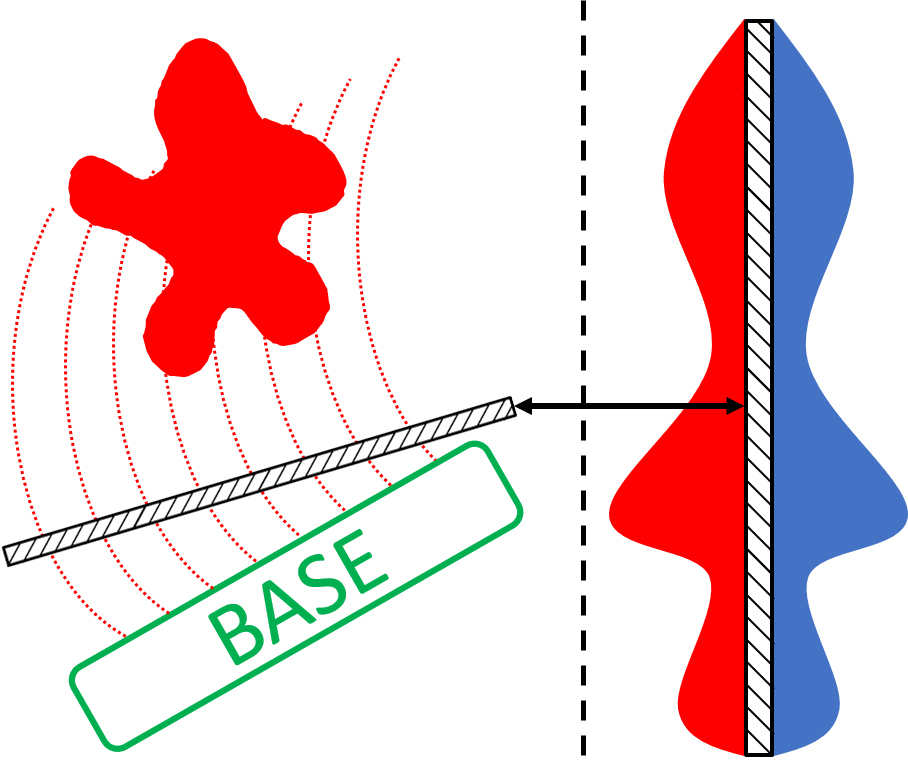}
\caption{Representation of the engagement strategy on an operational region with high number of bins. The red swarm converges to the base location . Red dots represent the path to be revealed by the policy of the red swarm. The shaded line represents the boundary bins determined by the blue swarm.}
\label{fig:Second_Strategy}
\end{figure}

Let $t_{fc}$ and $t_{lc}$ denote the first and last contact times that are the first and last time for any red agent to enter any determined boundary bin, respectively. This strategy is divided into two phase for before and after the time $t_{fc}$, respectively. 

\textbf{Phase 1} \textit{(Before the time $t_{fc}$):} In phase 1 distribution of the total density of red agents that enter the boundary bins, which is the desired distribution of the blue swarm, is estimated. In Theorem \ref{thm:elim}, it is proved that there exists a finite time $t_{fc}$ and there is not any elimination before the time $t_{fc}$ under some mild assumptions. Since there is no elimination until $t_{fc}$ and Markov chain of the red swarm does not change, the desired distribution of the blue swarm is a stationary distribution. Blue swarm designs a Markov chain to converge to this stationary distribution and uses the designed Markov chain until the time $t_{fc}$. 

\textbf{Phase 2} \textit{(After the time $t_{fc}$):} In phase 2, there are two different options that can be changed via convergence results and computation power of the blue swarm.

\textit{Option 1:} Markov matrix of the blue swarm turns to the identity matrix. Since the elimination changes the desired distribution of the blue swarm, propagation of the blue swarm with the same Markov chain may diverge to the blue swarm from the new desired distribution. 

\textit{Option 2:} If blue swarm cannot converge to the distribution of the total density of red agents that enter the boundary bins with a zero $L_1$ error, all agents of the red swarm cannot be eliminated. Hence, blue swarm may need to continue to converge between the times $t_{fc}$ and $t_{lc}$ to eliminate more red agents. Then, blue swarm can re-estimate the distribution of the total density of red agents that enter the boundary bins and design a new Markov chain after each elimination step. Memoryless property of the Markov-based algorithm allows blue swarm to assume each time-step as initial time-step. Re-designing of Markov chain of the blue swarm causes computation cost but it can provide better elimination results respect to turning the Markov matrix to identity matrix.

Estimation of the distribution of the total density of red agents that enter the boundary bins is represented on an example in Figure \ref{fig:M_r_s_Nodes}. Let make the following definitions for the algorithm. 

\begin{definition} \label{def:baseidxset}
\textit{(Index sets respect to the base location)}
The index set $I_b$ contains the bins belonging to the base location. The index set $I_{b-1}$ consists of the bins that are directly connected by adjacency matrix to the bins $i \in I_b$ and $I_{b-1} \cap I_b = \emptyset$, the index set $I_{b-2}$ consists of the bins that are directly connected by adjacency matrix to bins $i \in I_{b-1}$ and $I_{b-2} \cap (I_b \cup I_{b-1}) = \emptyset$, and so on.
\end{definition}

\begin{definition} \label{def:absorbing_states}
\textit{(Absorbing states)} In a Markov chain, if $M[i,i](k) = 1$ for a state $i \in R^{m \times m}$, the state $i$ is called as absorbing state for the time-step $k$.
\end{definition}

Bins of the operational region and Markov chain of the red swarm are modeled as vertices and transition probabilities of a graph in the example given in Figure \ref{fig:M_r_s_Nodes}. Assume that, red swarm is propagated to the base location that is the bin $R_N \in I_b$ for this case. They must enter the any bin from the set $I_{b-1} = \{R_{N-1}, R_{N-2} \}$, which are determined boundary bins, to reach to the base location. If we modify the states of the determined boundary to absorbing states as on the right side of the Figure \ref{fig:M_r_s_Nodes}, all agents of the red swarm will be stuck in the determined boundary bins. If we propagate red swarm with modified transition probabilities, we can estimate the distribution of the total density of red swarm that enters the boundary bins.

\begin{figure}[H]
\centering
\includegraphics[width=0.45\textwidth]{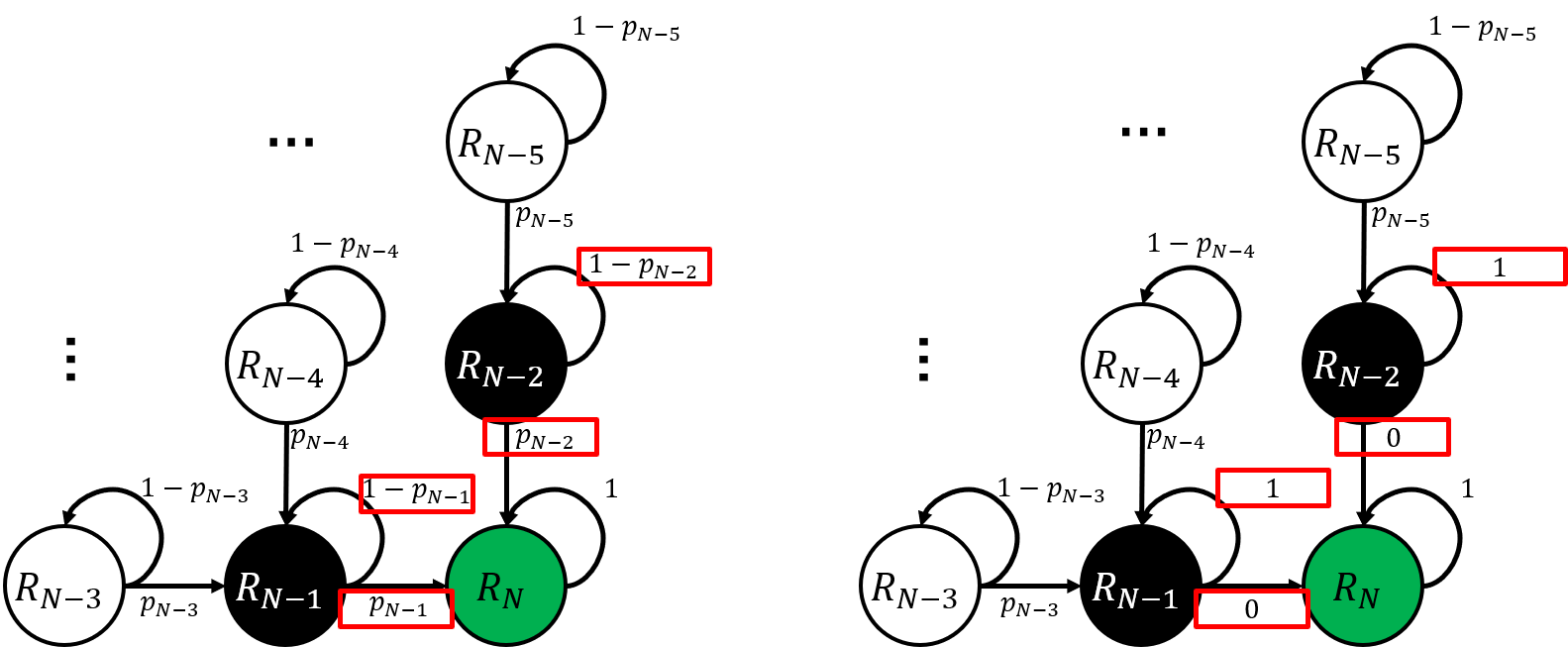}
\caption{The nodes of the graphs represent the bins of the operational region. Arrows represent the transition probabilities with directions for an agent (some possible transitions are ignored for the clarity of the figure). Green and black circles represent the bins that belong to the base location and determined boundary bins, respectively.}
\label{fig:M_r_s_Nodes}
\end{figure}

We will give the Lemma \ref{lem:mrs} for estimation of the distribution of the total density of red swarm in some determined boundary bins under some mild assumptions.


\begin{lemma} \label{lem:mrs}
    Assume that, red swarm is in the bins $i \in (I_{b-c} \cup I_{b-c-1} \cup ... \text{and so on.})$ for any $c = 1,2,...$ at time-step $k$ and satisfies the asymptotic convergence to the base location condition given in the Eq. (\ref{eq:br2rb}). 
    The Eq. (\ref{eq:srstfc}) gives the estimation of the distribution of total density of red agents that enter each bin $i \in I_{b-p}$ during propagation of the red swarm in the absence of any elimination in the bins $i \in (I_{b-p-1} \cup I_{b-p-2} \cup ... \text{and so on.})$ for any $p = 1,2,...,c$.
    \begin{equation} \label{eq:srstfc}
        \hat{x}_{r_s}(k+t_{lc}) = U_{r_s}(k:k+t_{lc}) x_r^+(k) \text{\quad where}
    \end{equation}
    \begin{equation} \label{eq:tlc}
        \begin{aligned}
            t_{lc} = \argmin_n \Big( U_{r_s}(k:k+n)x_{r}^+(k)[i] &= 0 \text{\; for \;} i \notin I_{b-p} \Big).\\
        \end{aligned}
    \end{equation}
    \begin{equation} \label{eq:urs}
        U_{r_s}(k:k+t_{lc}) =  M_{r_s}(k+t_{lc}-1)  M_{r_s}(k+t_{lc}-2) ...  M_{r_s}(k),
    \end{equation}
    \begin{equation} \label{eq:mrs}
        \begin{aligned}
        M_{r_s}[j,i](k+n) =&
            \begin{cases}
                M_{r}[j,i](k+n) & \text{if $i \not\in I_{b-p}$} \\
                1 & \text{if $i = j$ and $i \in I_{b-p}$} \\
                0 & \text{if $i \neq j$ and $i \in I_{b-p}$}
            \end{cases}\\
            & \text{ \quad for $n = 0,1,...,t_{lc}$,}
        \end{aligned}
    \end{equation}
\end{lemma}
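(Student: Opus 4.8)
The plan is to recognize the modified matrix $M_{r_s}$ as the transition operator of an \emph{absorbing} Markov chain in which every boundary bin $i \in I_{b-p}$ is an absorbing state, and then to show that the mass this chain accumulates in $I_{b-p}$ is exactly the (first-entry, hence total) red density that crosses the boundary. The estimate (\ref{eq:srstfc}) will then follow by evaluating the accumulated mass once the transient region has drained at time $t_{lc}$.

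First I would establish the separation property implied by Definition~\ref{def:baseidxset}. Since $I_{b-\ell}$ is the shell of bins at graph distance $\ell$ from the base and the shells are built outward by one-step adjacency, a single allowable transition connects only bins whose shell index differs by at most one; a bin adjacent to $I_{b-\ell}$ lies in $I_{b-\ell-1}\cup I_{b-\ell}\cup I_{b-\ell+1}$ by construction. Consequently every directed transition path from the initial support, contained in $\bigcup_{j\ge c} I_{b-j}$ with $c\ge p$, to the base $I_b$ must pass through $I_{b-p}$, and the initial distribution $x_r^+(k)$ places no mass strictly inside the boundary, i.e. none on $I_b\cup\dots\cup I_{b-p+1}$.

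Next I would compare $M_{r_s}$ with $M_r$. By (\ref{eq:mrs}) the two matrices share every column indexed by $i\notin I_{b-p}$ and differ only on the boundary columns, which $M_{r_s}$ replaces by the unit vectors $e_i$. Hence the two chains evolve identically until mass first reaches $I_{b-p}$, so the first-entry time and the bin at which each unit of red mass first enters the boundary are the same under $M_r$ and $M_{r_s}$. Because $M_{r_s}[j,i]=0$ for $i\neq j,\ i\in I_{b-p}$, no mass is ever transported between boundary bins nor out of them; the mass residing in each $i\in I_{b-p}$ is therefore non-decreasing in $n$ and equals the cumulative density entering $i$ directly from the far region $\bigcup_{j>p} I_{b-j}$ (the only possible source, since the near region stays empty). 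Evaluating at $n=t_{lc}$, once the transient mass has drained, $\hat{x}_{r_s}(k+t_{lc})[i]$ equals the total red density entering bin $i$, which is the claimed estimate; the hypothesis of no elimination on $\bigcup_{j>p} I_{b-j}$ is precisely what makes the free propagation by $M_r$, and therefore by $M_{r_s}$, valid all the way up to the boundary.

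The main obstacle is the finiteness and well-definedness of $t_{lc}$ in (\ref{eq:tlc}): that the density outside $I_{b-p}$ becomes \emph{exactly} zero at some finite step. I would argue that in $M_{r_s}$ the near region $I_b\cup\dots\cup I_{b-p+1}$ is unreachable from the support, so the only recurrent class is the absorbing set $I_{b-p}$ while the reachable far region is transient; combined with the convergence hypothesis (\ref{eq:br2rb}), this forces the limiting distribution of $M_{r_s}$ to be supported entirely on $I_{b-p}$. To upgrade this \emph{asymptotic} absorption to finite-time drainage I would need the transient sub-block of $M_{r_s}$ to be nilpotent, i.e. the inward flow of $M_r$ across the finitely many shells $I_{b-c},\dots,I_{b-p+1}$ to carry every unit of mass to the boundary within finitely many steps without indefinite recirculation. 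Establishing this nilpotency, rather than mere geometric decay of the transient mass, is the delicate point, since synthesized chains typically carry self-loops that keep transient mass strictly positive for all finite $n$; absent it, $t_{lc}$ must be read as the absorption limit $n\to\infty$ and the estimate as $\lim_{n} U_{r_s}(k{:}k{+}n)x_r^+(k)$.
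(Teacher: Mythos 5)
Your proposal follows essentially the same route as the paper's proof: modify the chain as in Eq.~(\ref{eq:mrs}) so that the boundary bins $I_{b-p}$ become absorbing, observe that the modified and original chains coincide until mass first reaches the boundary (so absorbed mass equals first-entry mass), and invoke the shell structure of Definition~\ref{def:baseidxset} together with the convergence hypothesis Eq.~(\ref{eq:br2rb}) to conclude that all red density eventually accumulates in $I_{b-p}$, which yields Eq.~(\ref{eq:srstfc}). The paper's own proof is exactly this argument, stated much more tersely.

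Your final paragraph, however, is more careful than the paper on one point, and you are right to flag it. The paper simply asserts that ``there exist a time for the all agents of the red swarm to get stuck in the bins $i \in I_{b-p}$,'' i.e.\ that $t_{lc}$ as defined in Eq.~(\ref{eq:tlc}) is finite and attained. As you observe, for the \emph{density vector} (as opposed to a finite sample of agents, whose absorption times are almost surely finite) this requires the reachable transient block of $M_{r_s}$ to be nilpotent: if the synthesized chain $M_r$ carries self-loops or any recirculation among the far shells, then $U_{r_s}(k{:}k{+}n)x_r^+(k)[i]$ decays geometrically but never equals zero for $i \notin I_{b-p}$ at any finite $n$, and the condition in Eq.~(\ref{eq:tlc}) has no finite minimizer. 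The paper's proof does not address this; the statement is only rigorous if $t_{lc}$ is read as the absorption limit $n \to \infty$ (your fallback, under which $\hat{x}_{r_s}$ is precisely the absorption distribution of the modified chain) or if Eq.~(\ref{eq:tlc}) is relaxed to a numerical tolerance. So your proof is correct wherever the paper's is, and your reservation identifies a genuine imprecision in the lemma that the paper's own proof glosses over rather than resolves.
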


\begin{proof}
    According to the Definition \ref{def:baseidxset}, since red swarm is in the bins $i \in (I_{b-c} \cup I_{b-c-1} \cup ... \text{and so on.})$ for any $c=1,2,...$ at time-step $k$ and satisfies the asymptotic convergence condition to the base location given in the Eq. (\ref{eq:br2rb}), there is a time that all agents of the red swarm enter the some bins of the set $I_{b-p}$ in the absence of any elimination in the bins $i \in (I_{b-p-1} \cup I_{b-p-2} \cup ... \text{and so on.})$ for any $p = 1,2,...,c$.

    Markov chain of the red swarm 
    can be modified as in the Eq. (\ref{eq:mrs}) to make the states of the determined boundary absorbing states. Hence, agents of the red swarm get stuck in the bins $i \in I_{b-p}$. 
    
    Considering that we propagate the red swarm with the modified Markov chain $\big($ $U_{r_s}$ $\big)$, there exist a time for the all agents of the red swarm to get stuck in the bins $i \in I_{b-p}$. Then, the last contact time can be calculated as in Eq. (\ref{eq:tlc}).

    Hence, we can estimate the distribution of the total density of red agents that enter any bin $i \in I_{b-p}$ as in Eq. (\ref{eq:srstfc}).

\end{proof}

\begin{assumption} \label{asm:motion_blue_red}
    \textit{(Capability of the blue swarm)} Blue and red swarms have the same transition constraints, so they have the same adjacency matrices and blue swarm is propagated from transient states to recurrent states using the shortest path. 
\end{assumption}

The Assumption \ref{asm:motion_blue_red} is given for the blue swarm to make Theorem \ref{thm:elim} suitable. A method is proposed in \cite{uzun2021-2} for the propagation from transient states to recurrent states using the both shortest path and density distribution on the recurrent states. We will give the following theorem for estimation of the maximum number of red agents that enter the base location for any determined boundary bins.


\begin{theorem} \label{thm:elim}

Assume that, red swarm is initialized as in the Lemma \ref{lem:mrs}, blue swarm have the properties given in the Assumption \ref{asm:motion_blue_red}, it is in the bins $i \in (I_b \cup I_{b-1} ... \cup I_{b-c+1})$ for any $c = 1,2,...$ at time-step $k$ and satisfies the asymptotic convergence condition $\lim_{n \to \infty} x_b(k+n) = \hat{x}_{r_s}(k+t_{lc})$. Let $N_{r_{ent}}$ denotes the total number of red agents that enter the any bin $i \in I_b$. The Eq. (\ref{eq:uptv}) gives the estimation of the maximum number of red agents that enter the any bin $i \in I_b$ respect to elimination dynamics given in the Eq. (\ref{eq:xbre_1}).
\begin{equation} \label{eq:ur_nrent}
   \hat{L}_r \geq N_{r_{ent}} \text{ where}
\end{equation}
\squeezeup
\begin{equation} \label{eq:uptv}
   \hat{L}_r = \bm{1}^T \max \bigg( \Big( \hat{s}_{r_s}(k+t_{lc}) - \hat{s}_{b}(k+t_{fc}) \Big), \bm{0} \bigg),
\end{equation}
\squeezeup
\begin{equation} \label{eq:srtlc}
    \hat{s}_{r_s}(k+t_{lc}) = N_r^+(k) \hat{x}_{r_s}(k+t_{lc}),
\end{equation}
\squeezeup
\begin{equation} \label{eq:sbtfc}
    \hat{s}_{b}(k+t_{fc}) = N_b^+(k) \hat{x}_{b}(k+t_{fc}),
\end{equation}
\squeezeup
\begin{equation} \label{eq:xbtfc}
    \hat{x}_{b}(k+t_{fc}) = U_b(k:k+t_{fc}) x_b^+(k),
\end{equation}
\squeezeup
\begin{equation} \label{eq:tfc}
    \begin{aligned}
        t_{fc} &= \argmin_n \Big(U_{r_s}(k:k+n) x_{r}^+(k)[i] \neq 0, \quad i \in I_{b-p} \Big)\\
    \end{aligned},
\end{equation}
for any $ p = 1, ...,c$.
\end{theorem}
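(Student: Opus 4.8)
The plan is to reduce the global statement to a per-bin elimination budget on the boundary shell $I_{b-p}$ and then sum. First I would establish that the first contact time $t_{fc}$ of Eq. (\ref{eq:tfc}) is finite and that no elimination can occur before it. Under the hypotheses the blue swarm occupies the inner shells $I_b \cup \dots \cup I_{b-c+1}$ while the red swarm occupies the outer shells $I_{b-c} \cup \dots$, so the two supports are initially disjoint; moreover, by the distance-ring structure of Definition \ref{def:baseidxset}, every red agent must cross the boundary $I_{b-p}$ before it can reach any base bin $i \in I_b$. Consequently $s_b^- = s_b^+$ and $s_r^- = s_r^+$ on the interval $[k,k+t_{fc})$, which means the propagation estimates are valid up to first contact: the red estimate $\hat{x}_{r_s}(k+t_{lc})$ of Lemma \ref{lem:mrs} and the blue estimate $\hat{x}_b(k+t_{fc})$ of Eq. (\ref{eq:xbtfc}) are computed on a genuinely elimination-free interval, with finiteness of $t_{fc}$ inherited from the convergence of the red swarm established in Lemma \ref{lem:mrs}.

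Next I would use Lemma \ref{lem:mrs} to read $\hat{s}_{r_s}(k+t_{lc})[i] = N_r^+(k)\,\hat{x}_{r_s}(k+t_{lc})[i]$ as the total accumulated red density whose first entry into the shell $I_{b-p}$ occurs in boundary bin $i$. Because the boundary separates the red support from the base, the total red mass that could ever reach $I_b$ in the absence of the blue swarm is exactly $\sum_{i \in I_{b-p}} \hat{s}_{r_s}(k+t_{lc})[i]$; this is the quantity the blue swarm must erode. By hypothesis the blue density converges to $\hat{x}_{r_s}(k+t_{lc})$, and under Assumption \ref{asm:motion_blue_red} (equal adjacency, shortest-path feeding of the recurrent states) the blue agents can concentrate on these boundary bins, with $\hat{s}_b(k+t_{fc})[i] = N_b^+(k)\,\hat{x}_b(k+t_{fc})[i]$ defenders present in bin $i$ at first contact.

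The core of the argument is then a per-bin budget. Fixing a boundary bin $i \in I_{b-p}$, I would track the frozen blue garrison of size $b_i := \hat{s}_b(k+t_{fc})[i]$ against the cumulative red inflow $R_i := \hat{s}_{r_s}(k+t_{lc})[i]$ accumulated over $[t_{fc},t_{lc}]$. Under Option 1 the blue Markov matrix becomes the identity after $t_{fc}$, so these $b_i$ agents remain in bin $i$ and, by the elimination dynamics of Eq. (\ref{eq:xbre_1}), remove incoming red agents one-for-one at each step until the garrison is exhausted. Since the garrison is present throughout the inflow window, no red agent can cross bin $i$ while blue mass remains there, so exactly $\min(b_i,R_i)$ red agents are eliminated and the red mass leaking past bin $i$ toward the base is $\max(R_i - b_i,0)$. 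Summing over $i \in I_{b-p}$, and recalling that every base-bound red agent is counted exactly once at its boundary crossing, yields
\begin{equation*}
    N_{r_{ent}} \leq \sum_{i \in I_{b-p}} \max\!\big(\hat{s}_{r_s}(k+t_{lc})[i] - \hat{s}_b(k+t_{fc})[i],\, 0\big) = \hat{L}_r,
\end{equation*}
which is Eq. (\ref{eq:uptv}). The inequality, rather than equality, absorbs two sources of slack: the first-contact blue count $\hat{s}_b(k+t_{fc})$ underestimates the defenders actually available over $[t_{fc},t_{lc}]$ whenever the blue swarm keeps converging (Option 2) instead of freezing, and the deterministic density bookkeeping upper-bounds the stochastic realization produced by Algorithm \ref{alg:pga}.

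I expect the third step to be the main obstacle. The delicate point is that the two estimates are evaluated at different instants, $t_{lc}$ for the red inflow and $t_{fc}$ for the blue garrison, so I must argue rigorously that the defenders counted at $t_{fc}$ are genuinely available to meet the red agents that arrive later, and that no red mass slips past a boundary bin while blue mass is still present there. Making this ``no leakage while defended'' statement precise at the level of densities rather than individual agents, and controlling the gap between the frozen-garrison estimate and the continued-convergence case, is where the real work lies; the reduction to the sum of $\max(\cdot,0)$ terms is then immediate.
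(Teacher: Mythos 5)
Your proposal is correct and takes essentially the same route as the paper's own proof: both apply Lemma \ref{lem:mrs} for the red inflow at the boundary bins (justified by the disjoint initial supports, the shared adjacency matrix, and the shortest-path propagation of Assumption \ref{asm:motion_blue_red}), freeze the blue swarm at $t_{fc}$ as in Option 1 to obtain the garrison $\hat{s}_b(k+t_{fc})$, and obtain the inequality from the extra elimination available under Option 2 or from blue agents still in transit between the boundary and the base. Your per-bin budget computation is simply a more explicit working-out of the single sentence in which the paper invokes the elimination dynamics of Eq. (\ref{eq:xbre_1}).
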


\begin{proof}
    If blue swarm satisfies the asymptotic convergence condition $\lim_{n \to \infty} x_b(k+n) = \hat{x}_{r_s}(k+t_{lc})$, recurrent bins for the blue swarm is some bins from the set $I_{b-p}$ for $p=1,2,...,c$. 
    
    Since blue swarm is in the bins $i \in (I_b \cup I_{b-1} ... \cup I_{b-c+1})$, red swarm is in the bins $i \in (I_{b-c} \cup I_{b-c-1} \cup$ and so on $...)$ for any $c = 1,2,...$, blue swarm is propagated to its recurrent bins $i \in I_{b-p}$ for $p=1,2,...,c$ using shortest path and transition constraints are the same for the blue and red swarms, any agent from the red swarm cannot be eliminated in the bins $i \in (I_{b-p-1} \cup I_{b-p-2} \cup ...)$ which satisfies the absence of any elimination requirement in the Lemma \ref{lem:mrs}. Thus, we can estimate the distribution of total density and the total number of red agents that enter each bin $i \in I_{b-p}$ during propagation of the red swarm with the Eq. (\ref{eq:srstfc} and \ref{eq:srtlc}).
    
    Before the first contact time $t_{fc}$ given in the Eq. (\ref{eq:tfc}), we can find estimation of the distribution of the number of blue agents with the Eq. (\ref{eq:sbtfc}). According to the elimination dynamics given in the Eq. (\ref{eq:xbre_1}), the number of red agents that pass the bins $i \in I_{b-p}$ derived as $\hat{L}_r$ can be estimated with the Eq. (\ref{eq:uptv}) in cases where the blue swarm is propagated only until to time $t_{fc}$ and then uses identity matrix as policy.

    The estimation $\hat{L}_r$ is considered as the maximum number because blue swarm can re-design its Markov chain as updating the its desired distribution instead of using identity matrix after the first contact time $t_{fc}$. Also, red agents, which pass the bins $i \in I_{b-p}$ and propagate to the base location, can be eliminated by the blue agents that is in the bins $i \in (I_{b-p+1} \cup I_{b-p+2} \cup ... \cup I_{b*1}$) and have not yet converged to the desired distribution. Hence, the number of red agents that enter the bins $i \in I_{b}$ derived as $N_{r_{ent}}$ is satisfy the following equation,
    \begin{equation}
        \hat{L}_r \geq N_{r_{ent}}.
    \end{equation}
\end{proof}

\begin{corollary}
    If number of blue and red agents are equal to each other $\Big( N_b^+(k) = N_r^+(k) \Big)$, then Eq. (\ref{eq:uptv}) of Theorem \ref{thm:elim} can be simplified as,
    \begin{equation} \label{eq:uptv_eqNum}
        \hat{L}_r = N_b^+(k) \left\| \hat{x}_{r_s}(k+t_{lc}) - \hat{x}_{b}(k+t_{fc}) \right\|_{TV}. 
    \end{equation}
\end{corollary}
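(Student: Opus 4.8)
The plan is to reduce the corollary to a single identity relating the componentwise positive part of a difference of two probability vectors to their total variation distance. First I would substitute the equal-population hypothesis $N := N_b^+(k) = N_r^+(k)$ into the definitions (\ref{eq:srtlc}) and (\ref{eq:sbtfc}), giving
\begin{equation*}
   \hat{s}_{r_s}(k+t_{lc}) - \hat{s}_{b}(k+t_{fc}) = N\Big( \hat{x}_{r_s}(k+t_{lc}) - \hat{x}_{b}(k+t_{fc}) \Big).
\end{equation*}
Since $N > 0$, the scalar factors out of the componentwise maximum, so that (\ref{eq:uptv}) becomes $\hat{L}_r = N\,\bm{1}^T \max\big( \hat{x}_{r_s}(k+t_{lc}) - \hat{x}_{b}(k+t_{fc}),\, \bm{0} \big)$.

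It then remains to identify the quantity $\bm{1}^T \max(p - q, \bm{0})$ with $\|p - q\|_{TV}$ for the two probability vectors $p := \hat{x}_{r_s}(k+t_{lc})$ and $q := \hat{x}_{b}(k+t_{fc})$. The key observation is that both $p$ and $q$ are density distributions in the sense of the earlier definitions, hence $\bm{1}^T p = \bm{1}^T q = 1$ and therefore $\bm{1}^T(p-q) = 0$. Splitting the index set into those bins where $p[i] \geq q[i]$ and those where $p[i] < q[i]$, the vanishing of $\bm{1}^T(p-q)$ forces the total positive excess to equal the total negative deficit, and each of these equals $\tfrac12 \|p - q\|_1$. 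Writing $\bm{1}^T \max(p-q,\bm{0}) = \sum_i \max(p[i]-q[i],0)$ and recalling that $\|p-q\|_{TV} = \tfrac12 \|p - q\|_1$ then yields the claimed equality $\bm{1}^T \max(p-q, \bm{0}) = \|p-q\|_{TV}$.

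Combining the two steps gives $\hat{L}_r = N_b^+(k)\,\|\hat{x}_{r_s}(k+t_{lc}) - \hat{x}_{b}(k+t_{fc})\|_{TV}$, as stated. I do not expect a serious obstacle here: the only point requiring care is confirming that $\hat{x}_{r_s}$ and $\hat{x}_b$ remain genuine probability vectors after the projection of Lemma \ref{lem:mrs} and the propagation of Theorem \ref{thm:elim} — this normalization is exactly what legitimizes both pulling the common positive population $N$ out of the maximum and invoking the total-variation identity. Once $\bm{1}^T p = \bm{1}^T q = 1$ is established, the remainder is the standard two-line argument that the $L_1$ distance between two distributions is twice the sum of their positive coordinatewise differences.
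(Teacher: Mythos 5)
Your proposal is correct and follows essentially the same route as the paper's own proof: factor the common population $N_b^+(k)$ out of the componentwise maximum, then identify $\bm{1}^T \max(p-q,\bm{0})$ with $\left\| p - q \right\|_{TV}$ for the two probability vectors. The only difference is cosmetic — the paper cites Remark 4.3 of the Levin--Peres reference for that last identity, whereas you prove it inline from $\bm{1}^T(p-q)=0$ and $\left\| p - q \right\|_{TV} = \tfrac{1}{2}\left\| p - q \right\|_1$, which is a perfectly sound substitute.
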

\begin{proof}
    If $N_b^+(k) = N_r^+(k)$, Eq. (\ref{eq:uptv}) can be re-written as,
    \begin{align*}
        \hat{L}_r &= \bm{1}^T \max \bigg( \Big( \hat{s}_{r_s}(k+t_{lc}) - \hat{s}_{b}(k+t_{fc}) \Big), \bm{0} \bigg)\\
            &= \bm{1}^T \max \bigg( \Big( N_r^+(k) \hat{x}_{r_s}(k+t_{lc}) - N_b^+(k) \hat{x}_{b}(k+t_{fc}) \Big), \bm{0} \bigg)\\
            &= N_b^+(k) \bm{1}^T \max \bigg( \Big( \hat{x}_{r_s}(k+t_{lc}) - \hat{x}_{b}(t_{k+fc}) \Big), \bm{0} \bigg).\\
    \end{align*}
    Since $\hat{x}_{r_s}(k+t_{lc})$ and $\hat{x}_{b}(k+t_{fc})$ are probability distributions, we can also simplify the equation as follows,
    \begin{align*}
        \hat{L}_r &= N_b^+(k) \bm{1}^T \max \bigg( \Big( \hat{x}_{r_s}(k+t_{lc}) - \hat{x}_{b}(k+t_{fc}) \Big), \bm{0} \bigg)\\
            &= N_b^+(k) \sum_{i \in \mathcal{R}} \max \bigg( \Big( \hat{x}_{r_s}(k+t_{lc})[i] - \hat{x}_{b}(k+t_{fc})[i] \Big), \bm{0} \bigg)\\
            &= N_b^+(k) \sum_{\substack{i \in \mathcal{R} \\ \hat{x}_{r_s}(k+t_{lc})[i] > \\ \hat{x}_{b}(k+t_{fc})[i]}}  \Big( \hat{x}_{r_s}(k+t_{lc})[i] - \hat{x}_{b}(k+t_{fc})[i] \Big).\\
    \end{align*}
    As shown in the Remark 4.3 of \cite{levin2017markov}, derived equation is equal to total variance distance between two probability distributions,
    \begin{align*}
        \hat{L}_r 
            &= N_b^+(k) \left\| \hat{x}_{r_s}(k+t_{lc}) - \hat{x}_{b}(k+t_{fc}) \right\|_{TV}.
    \end{align*}
\end{proof}

\begin{algorithm}[!hbt]
    \begin{algorithmic}[1]
    \State Initialize the error value $\epsilon_{opt} \in (0,1)$
    \ForEach{$p  \in \{1,2, ...,c \}$}
        \State $U_{r_s} \xleftarrow{}$ Obtain the $U_{r_s}$ for $I_{b-p}$ as in the Eq. (\ref{eq:urs})
        \State $t_{lc}, t_{fc} \xleftarrow{}$ Compute the $t_{lc}$ and $t_{fc}$ as in Eq. (\ref{eq:tlc} and \ref{eq:tfc})
        \State $\hat{x}_{r_s}(t_{lc}) \xleftarrow{}$ Calculate $\hat{x}_{r_s}(t_{lc})$ as in Eq. (\ref{eq:srstfc})
        \State $U_b \xleftarrow{}$ Synthesize $U_b$ for the distribution $\hat{x}_{r_s}(t_{lc})$
        \State $\hat{L}_r \xleftarrow{}$ Calculate the $\hat{L}_r$ as in the Eq. (\ref{eq:uptv}) or (\ref{eq:uptv_eqNum})
        \If{$ \big( \hat{L}_r / N_r^+(k) \big) < \epsilon_{opt}$ \textbf{ or } $p == 1$}
            \State $U_{b_{chsn}} = U_b$
        \Else 
            \State \textbf{return} $M_{b_{chsn}}$
        \EndIf
    \EndFor
    \end{algorithmic}
\caption{Optimal boundary bins for the engagement of swarms}
\label{alg:maxdist}
\end{algorithm}

The purpose of the blue swarm is to maximize the distance between the base location and the determined boundary bins by keeping the ratio of the estimation of the maximum number of red agents that enter the base location ($\hat{L}_r$) to the total number of red agents ($N_r$) lower than chosen $\epsilon_{opt} \in (0,1)$ value. The optimization problem can be solved with the given algorithm in Algorithm \ref{alg:maxdist}. 

In the first iteration of the algorithm, the first boundary that is directly connected by the adjacency matrix to the base location ($I_{b-1}$) is determined. Markov chain of the red swarm is modified with respect to the determined boundary bins and the first and last contact times of the red swarm to the boundary bins are calculated (lines 3-4). Projection of the distribution of red swarm on determined boundary bins is calculated (line 5). Then, Markov chain of the blue swarm is designed with respect to the projection of the distribution of red swarm on determined boundary bins and the maximum number of red agents that enter the bins $i \in I_{b-p}$ is estimated (lines 6-7). Finally, the designed Markov chain of the blue swarm is chosen (lines 8-9). For the following iterations, the same operations continue with increasing the distance between the base location and determined boundary bins but designed Markov chain of the blue swarm is chosen only if the ratio of the estimation of the maximum number of red agents that enter the base location ($\hat{L}_r$) to the total number of red agents ($N_r$) lower than chosen $\epsilon_{opt} \in (0,1)$ value. In case the blue swarm does not converge even to the first boundary, the first boundary must be chosen.

\section{Numerical Results}\label{sec:results}

In this paper, we consider four different scenarios of the swarm-to-swarm engagement problem. For all cases, red swarm synthesizes a Markov matrix using the proposed method given in the Section \ref{sec:somm} to converge to the stationary distribution of the base location. A desired distribution is determined using strategies given in Section \ref{sec:doct} for the blue swarm. Then, blue swarm synthesizes a Markov matrix using the proposed method given in the Section \ref{sec:somm} to converge to its determined desired distribution.

\begin{table*}[!hbt] 
\caption{Numerical results of the convergence to the projection of the red swarm on a determined boundary bins for several simulation cases.}
\centering
\begin{tabular}{@{}|c|c|c|@{}} 
\hline
\begin{tabular}[c]{@{}c@{}}Operational Region\end{tabular} &2D &3D\\ \hline
Desired total variation&
\begin{tabular}[c]{@{}c@{}}0.1\end{tabular} & 
\begin{tabular}[c]{@{}c@{}}0.1\end{tabular} \\ \hline
Estimated total variation w.r.t. determined boundary bins&
\begin{tabular}[c]{@{}c@{}}0.078\end{tabular} & 
\begin{tabular}[c]{@{}c@{}}0.066\end{tabular} \\ \hline
Number of agents in the swarms &
\begin{tabular}[c]{@{}c@{}}1000\end{tabular} & 
\begin{tabular}[c]{@{}c@{}}200\end{tabular} \\ \hline
Estimated number of agents that will enter the base location &
\begin{tabular}[c]{@{}c@{}}78\end{tabular} & 
\begin{tabular}[c]{@{}c@{}}13\end{tabular} \\ \hline
Number of agents that entered to the base location (Update : False) &
\begin{tabular}[c]{@{}c@{}}69\end{tabular} & 
\begin{tabular}[c]{@{}c@{}}21\end{tabular} \\ \hline
Number of agents that entered to the base location (Update : True) &
\begin{tabular}[c]{@{}c@{}}13\end{tabular} & 
\begin{tabular}[c]{@{}c@{}}7\end{tabular} \\ \hline
\end{tabular}
\label{table:2d_3d_false_true}
\end{table*}

Simulation results are shown in the Figures \ref{fig:2d_false} - \ref{fig:3d_true}. For all figures, current time-step, remaining time for the first contact to determined boundary bins for the red swarm, the estimated number of red agents that enter the base location, and information about the elimination for current time step are given in the upper-left corner of the figures. The number of remaining blue agents, red agents, and red agents that enter the base location is given in the middle top of the figures. Information about the colors on the figures is given in the upper-right corner of the figures. Numeric results belong to these scenarios are given in the Table \ref{table:2d_3d_false_true}.

\begin{figure*}[!hbt]
\begin{minipage}[t]{0.24\textwidth}
\centering
\includegraphics[width=0.98\textwidth]{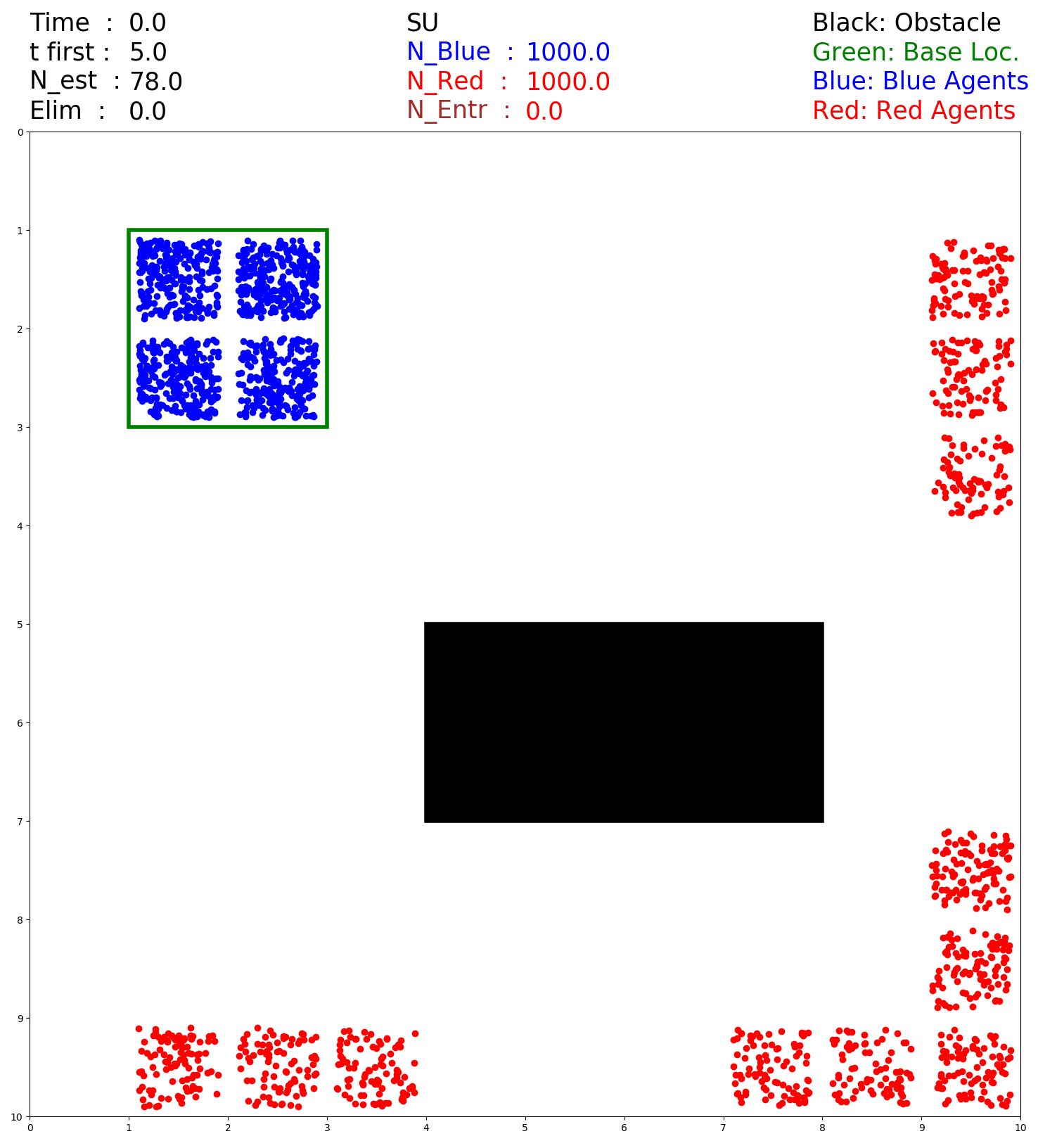}
\end{minipage}
\begin{minipage}[t]{0.24\textwidth}
\centering
\includegraphics[width=0.98\textwidth]{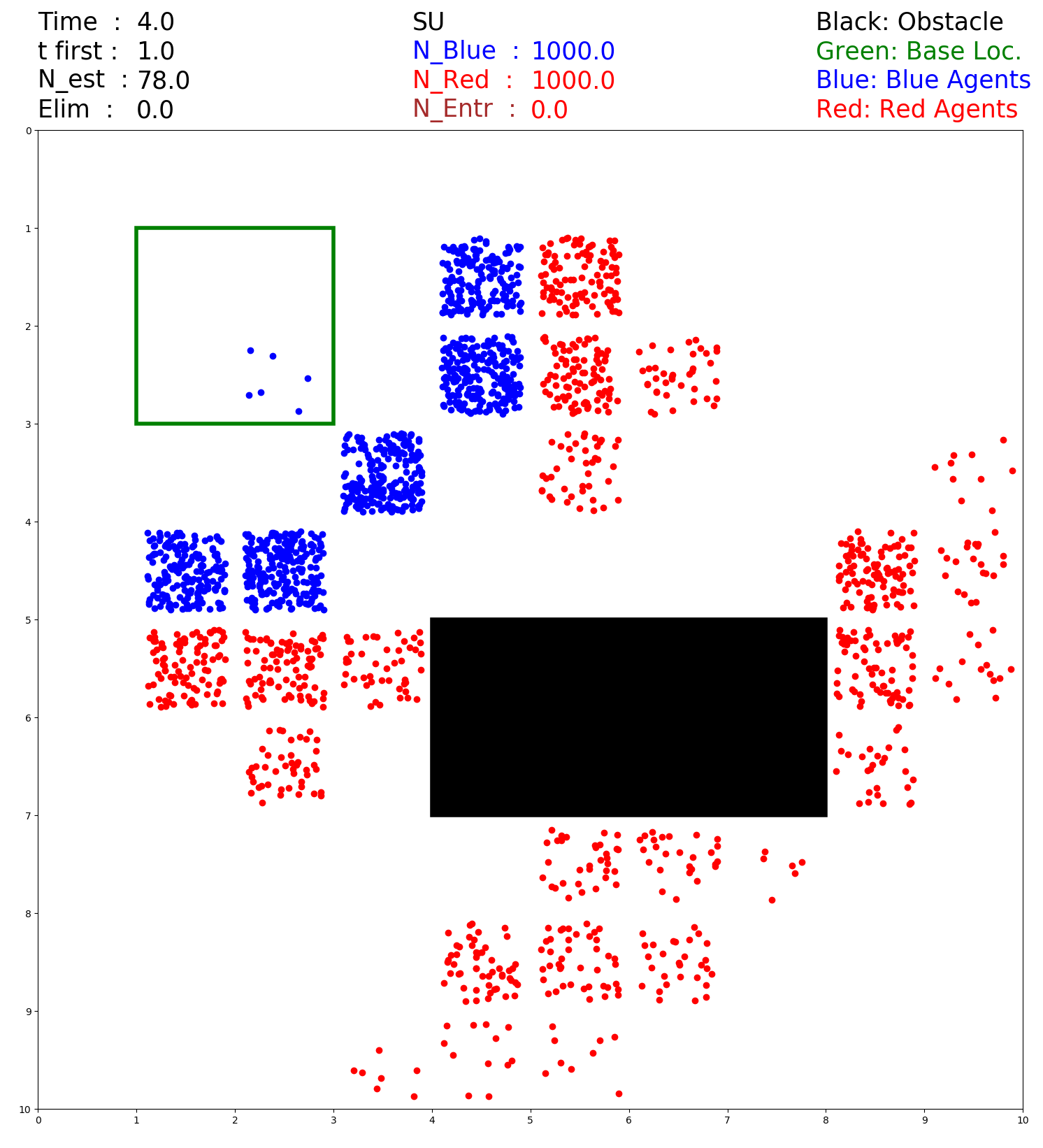}
\end{minipage}
\begin{minipage}[t]{0.24\textwidth}
\centering
\includegraphics[width=0.98\textwidth]{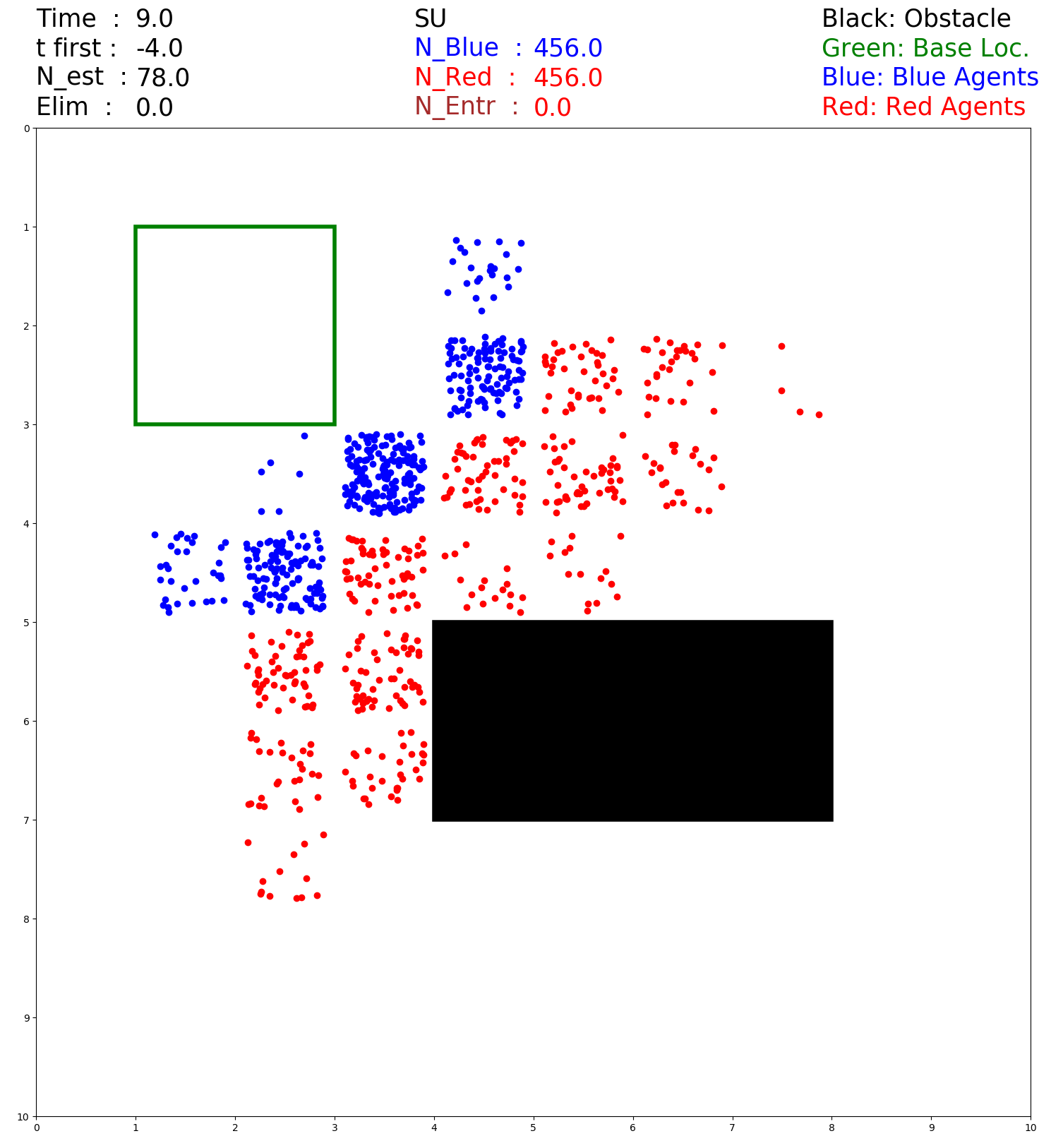}
\end{minipage}
\begin{minipage}[t]{0.24\textwidth}
\centering
\includegraphics[width=0.98\textwidth]{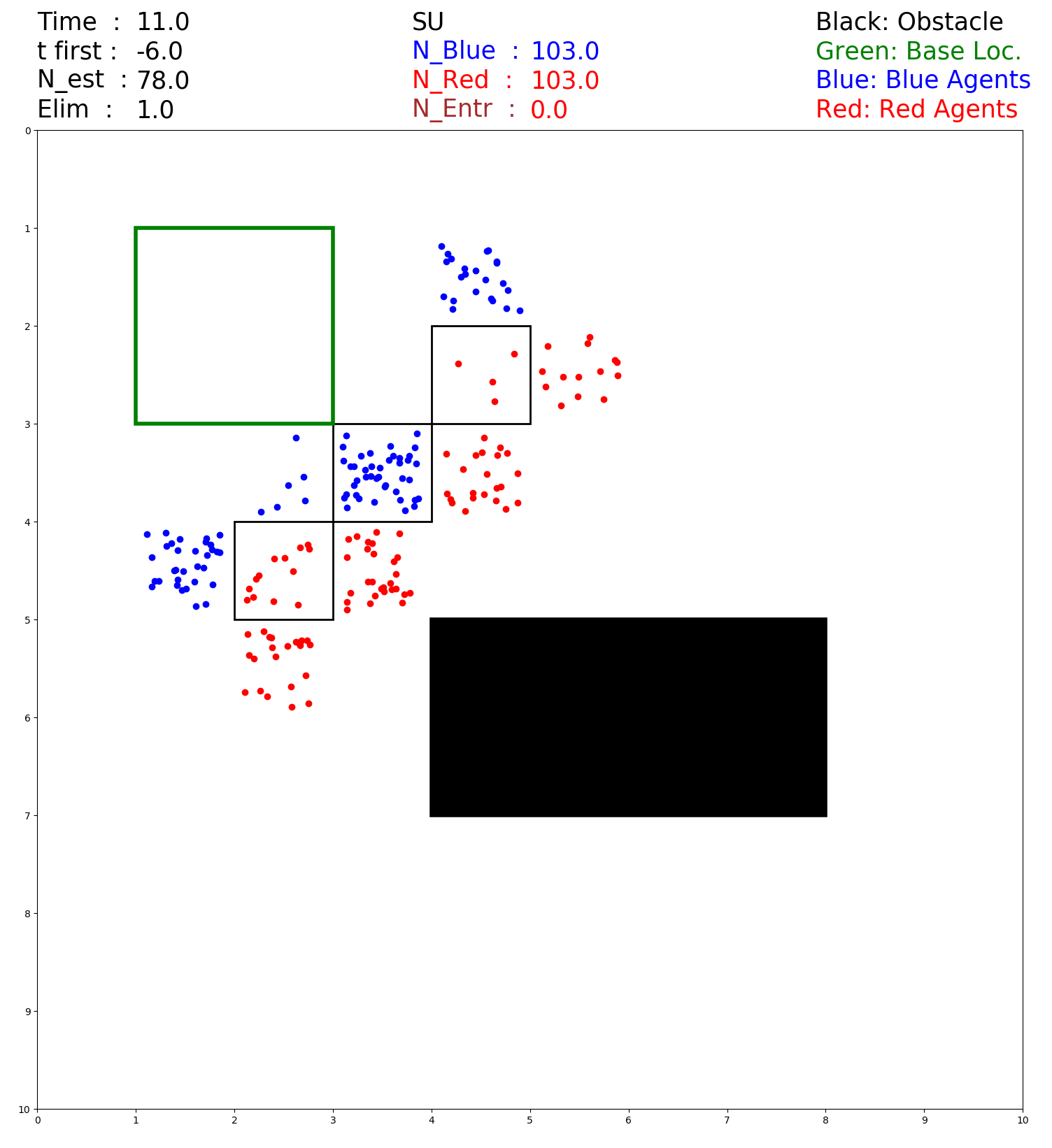}
\end{minipage}
\caption{Representation of the blue and red swarm on a 2D grid-map for several time instances when blue swarm performs boundary bins strategy given in Section \ref{sec:doct} to eliminate red swarm. Blue swarm did not update its desired distribution after each elimination step. This choice is introduced as Option 1 in Section \ref{sec:doct}.}
\label{fig:2d_false}
\end{figure*}

\begin{figure*}[!hbt]
\begin{minipage}[t]{0.24\textwidth}
\centering
\includegraphics[width=0.98\textwidth]{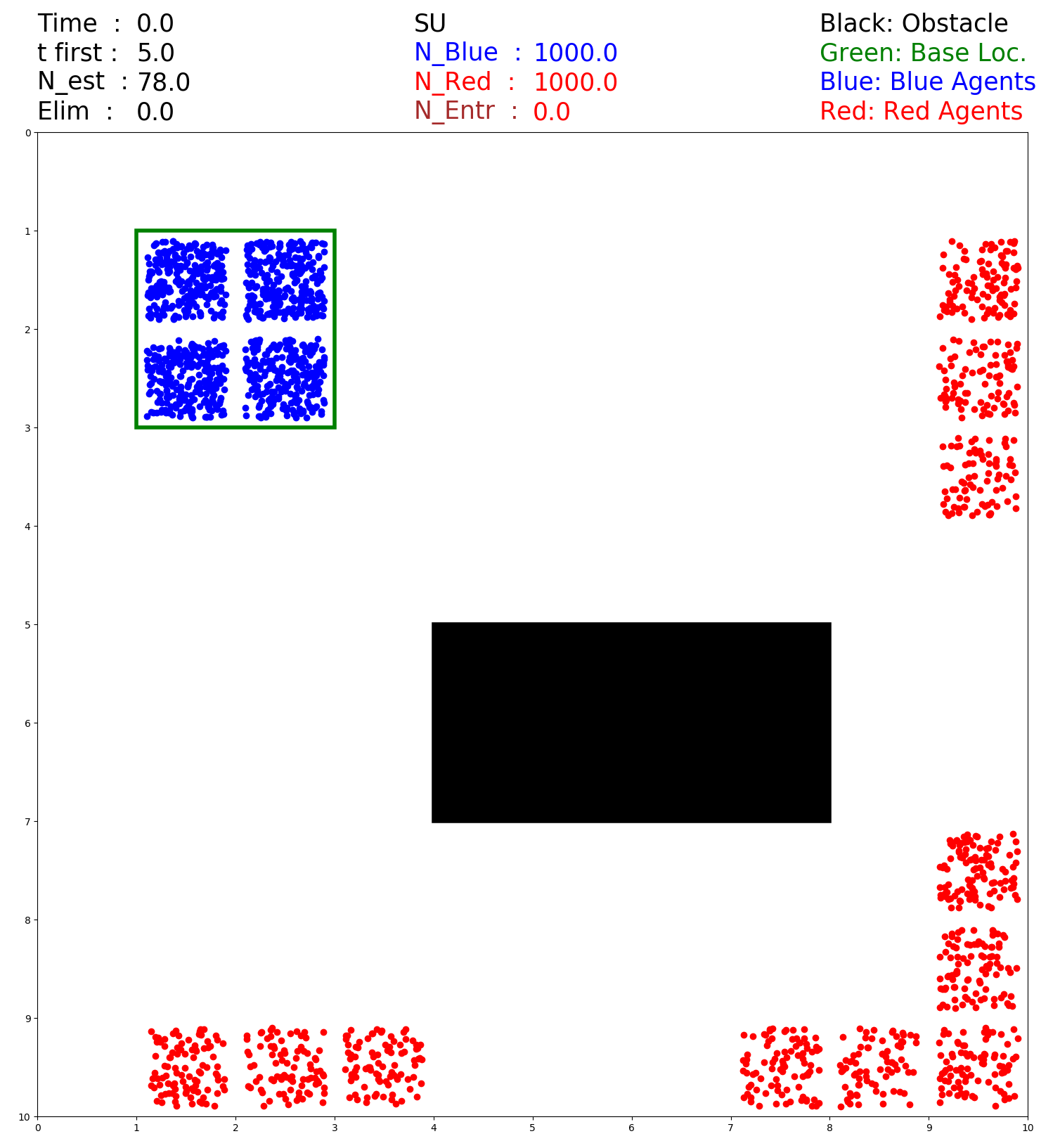}
\end{minipage}
\begin{minipage}[t]{0.24\textwidth}
\centering
\includegraphics[width=0.98\textwidth]{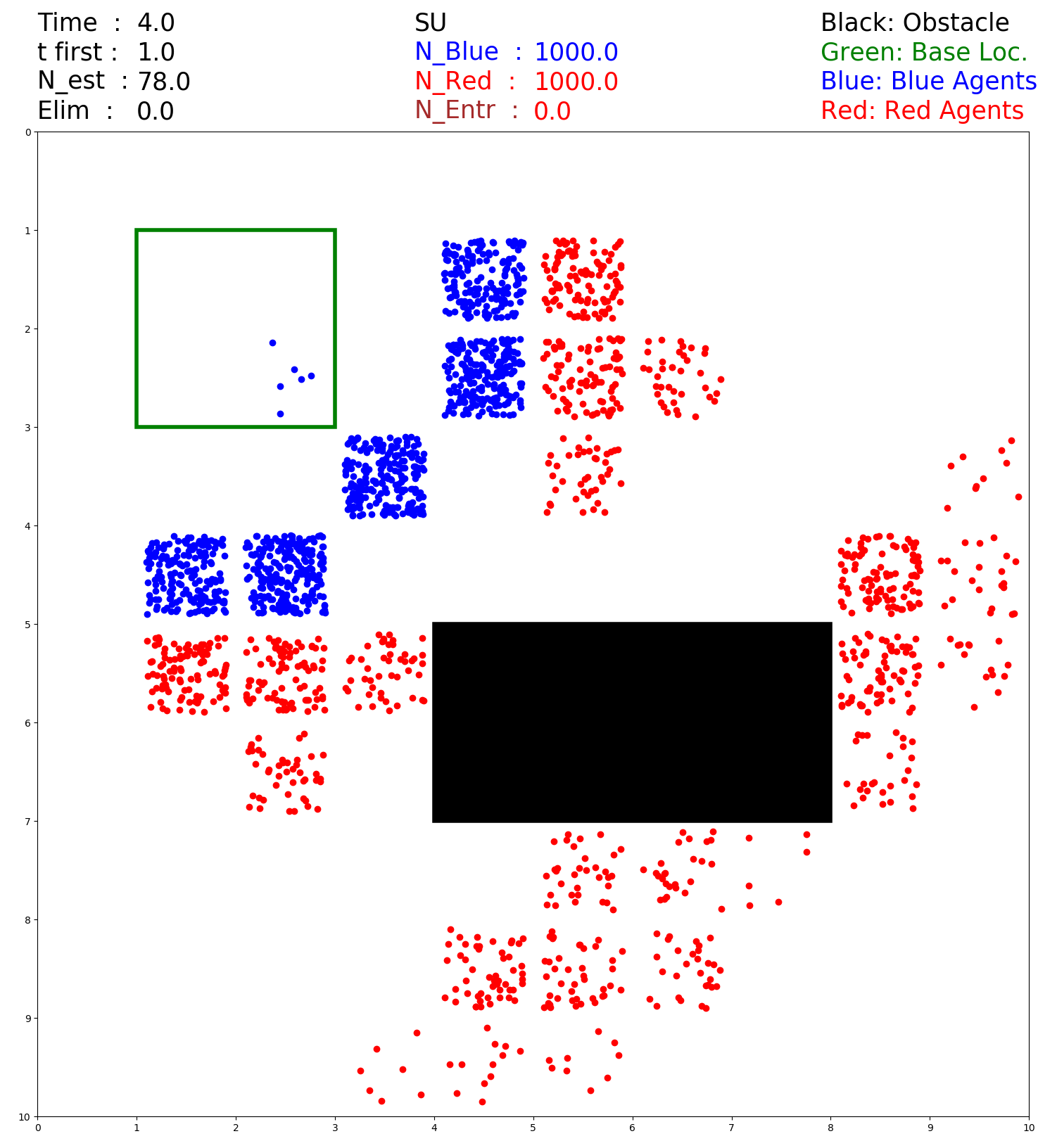}
\end{minipage}
\begin{minipage}[t]{0.24\textwidth}
\centering
\includegraphics[width=0.98\textwidth]{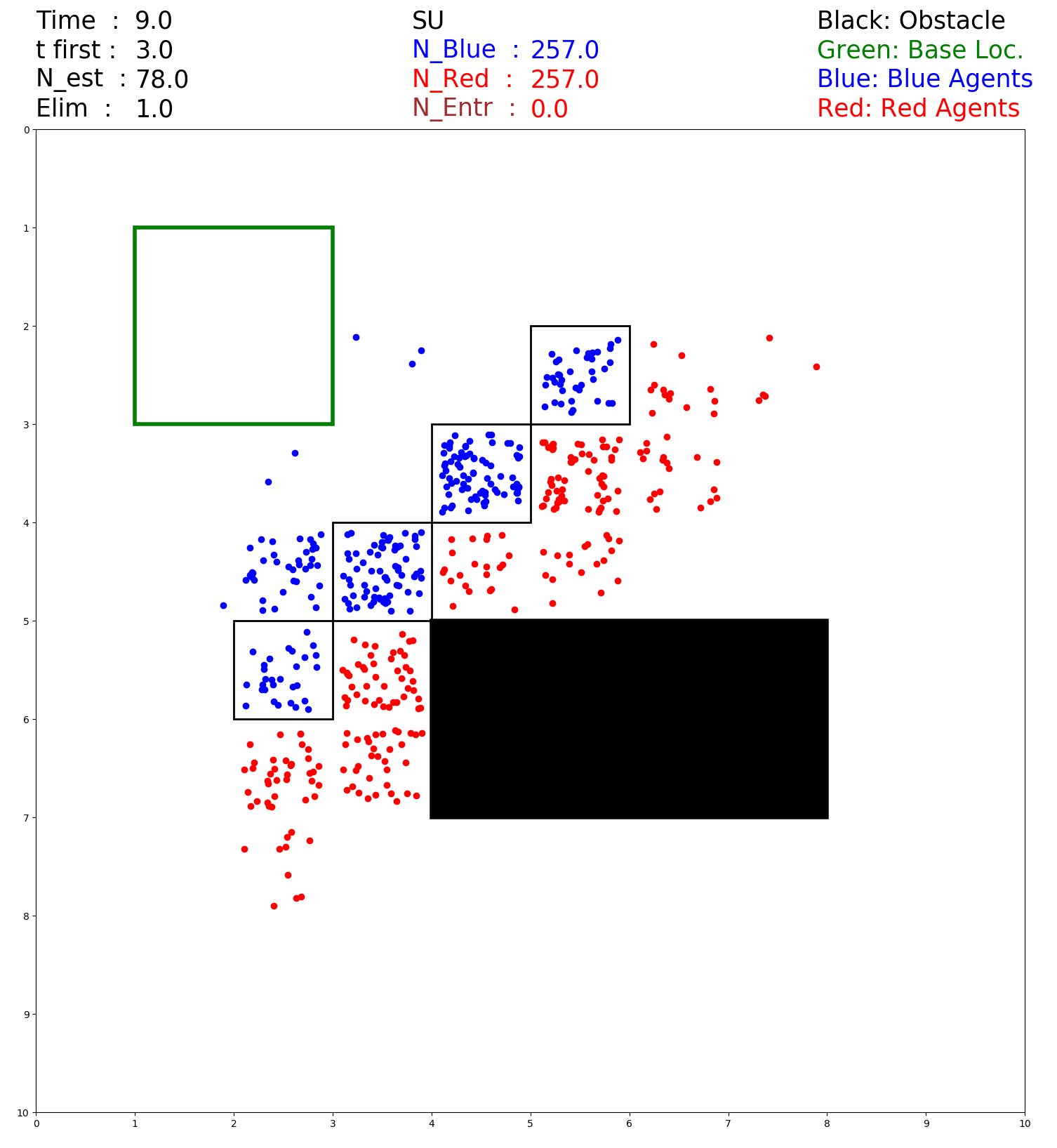}
\end{minipage}
\begin{minipage}[t]{0.24\textwidth}
\centering
\includegraphics[width=0.98\textwidth]{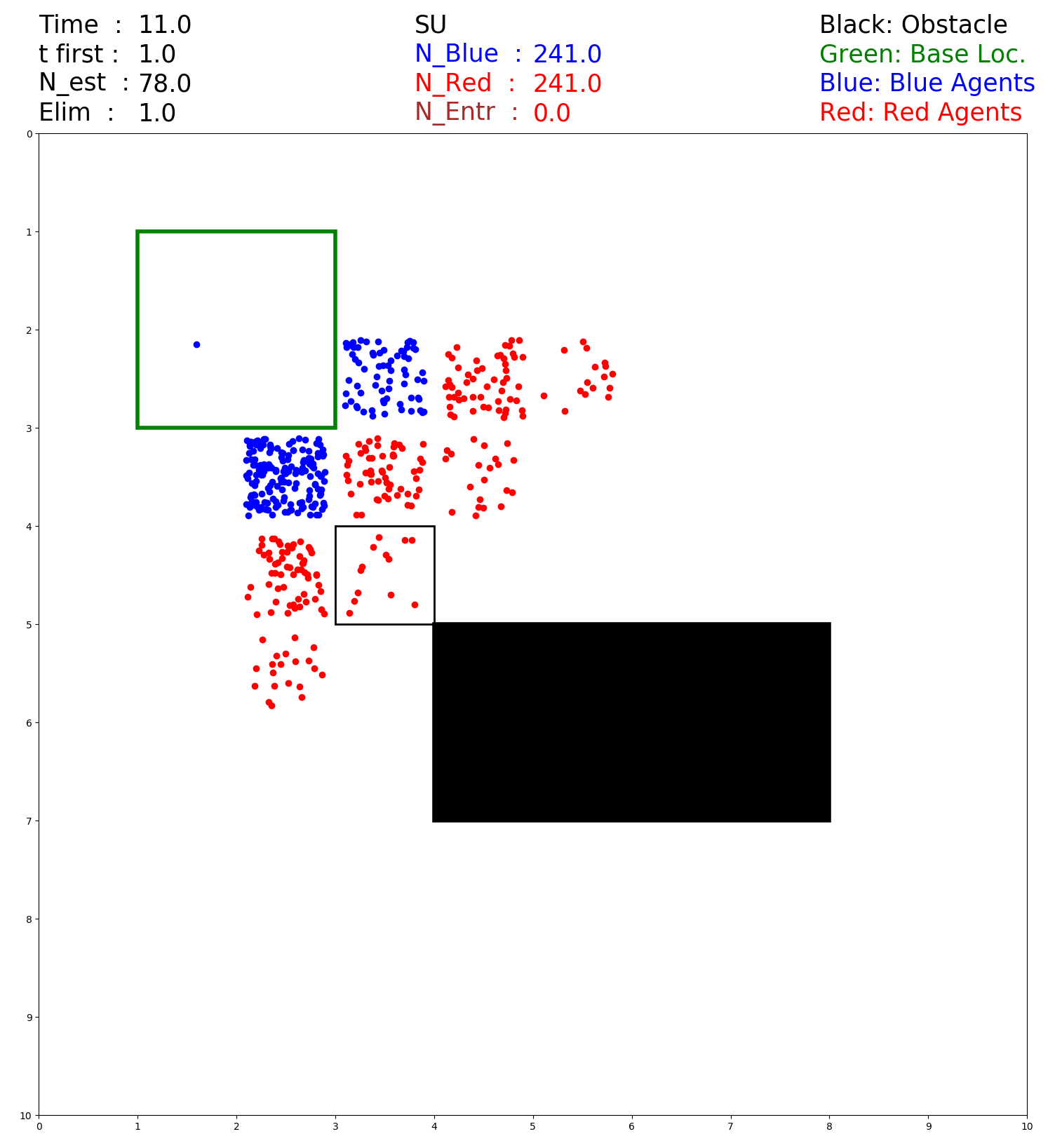}
\end{minipage}
\caption{Representation of the blue and red swarm on a 2D grid-map for several time instances when blue swarm perform boundary bins strategy given in Section \ref{sec:doct} to eliminate red swarm. Blue swarm updated its desired distribution after each elimination step. This choice is introduced as Option 2 in Section \ref{sec:doct}.}
\label{fig:2d_true}
\end{figure*}

In the first and second scenarios, a 2D grid-map, which consists of $8 \times 8$ bins, is considered. The grid-map contains a $2 \times 2$ base location on one corner. A number of thousands of blue and red agents are initialized at the base location and other sides of the map, respectively. There is a $2 \times 4$ obstacle in the middle of the grid-map. By simply adjusting the adjacency matrices of the swarms, they are prevented from entering the bins with the obstacle. 
In the first scenario, boundary bins are determined at the beginning of the scenario and blue swarm convergence to its desired distribution until the first contact time. Markov matrix of the blue swarm turns to an identity matrix after the first contact time as introduced as Option 1 in Section \ref{sec:doct}. In the second scenario, boundary bins are re-determined after each elimination step and blue swarm convergence to its new desired distribution until the new first contact time. Markov matrix of the blue swarm re-synthesized after each elimination step as introduced as Option 2 in the Section \ref{sec:doct}. Simulation results are shown in the Figure \ref{fig:2d_false} and \ref{fig:2d_true} for the different options, respectively. As can be seen in Table \ref{table:2d_3d_false_true}, the desired total variation to converge projection of the distribution of the red swarm on determined boundary bins is determined as $0.1$. The total variation is estimated as $0.078$ after boundary bins are chosen. Since there are $1000$ blue and red agents, the number of red agents that will enter the base location is estimated as $78$ using the Eq. (\ref{eq:uptv_eqNum}). The number of red agents that enter the base location is $69$ according to simulation results when Option 1 is used that is given in Section \ref{sec:doct}. When the desired distribution and Markov matrix of the blue swarm is updated after each elimination step as Option 2, the number of red agents that enter the base location is $13$ since blue swarm continues to converge after the first contact time.

\begin{figure*}[!hbt]
\begin{minipage}[t]{0.24\textwidth}
\centering
\includegraphics[width=0.95\textwidth]{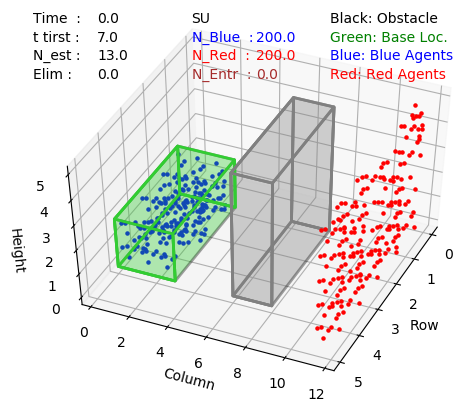}
\end{minipage}
\begin{minipage}[t]{0.24\textwidth}
\centering
\includegraphics[width=0.95\textwidth]{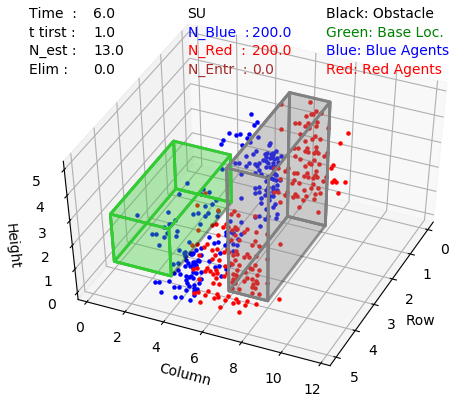}
\end{minipage}
\begin{minipage}[t]{0.24\textwidth}
\centering
\includegraphics[width=0.95\textwidth]{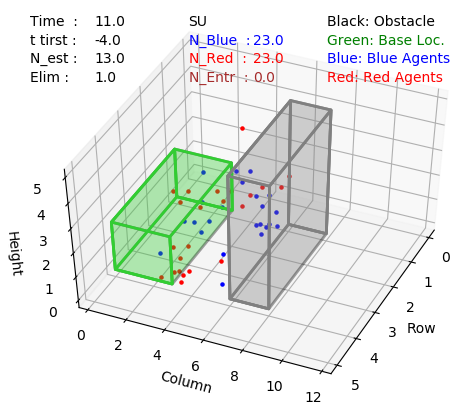}
\end{minipage}
\begin{minipage}[t]{0.24\textwidth}
\centering
\includegraphics[width=0.95\textwidth]{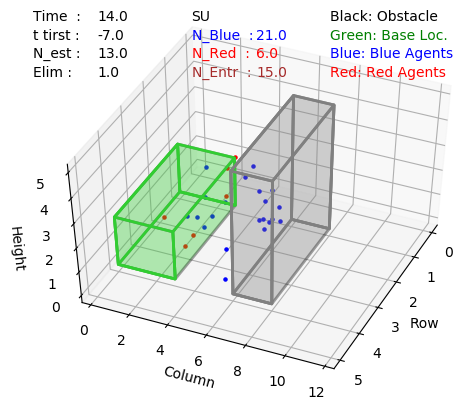}
\end{minipage}
\caption{Representation of the blue and red swarm on a 3D grid-map for several time instances when blue swarm performs boundary bins strategy given in Section \ref{sec:doct} to eliminate red swarm. Blue swarm did not update its desired distribution after each elimination step. This choice is introduced as Option 1 in Section \ref{sec:doct}.}
\label{fig:3d_false}
\end{figure*}

\begin{figure*}[!hbt]
\begin{minipage}[t]{0.24\textwidth}
\centering
\includegraphics[width=0.95\textwidth]{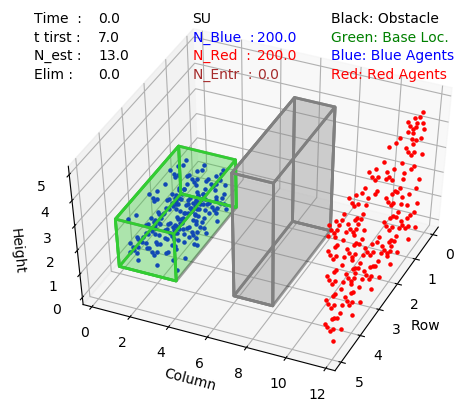}
\end{minipage}
\begin{minipage}[t]{0.24\textwidth}
\centering
\includegraphics[width=0.95\textwidth]{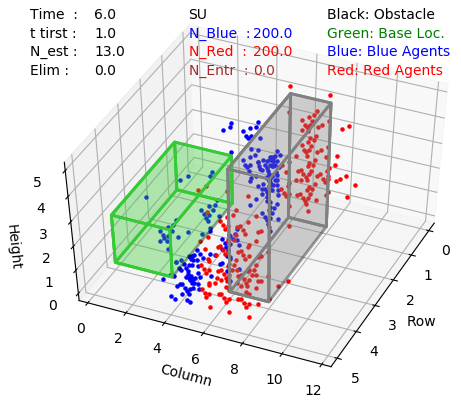}
\end{minipage}
\begin{minipage}[t]{0.24\textwidth}
\centering
\includegraphics[width=0.95\textwidth]{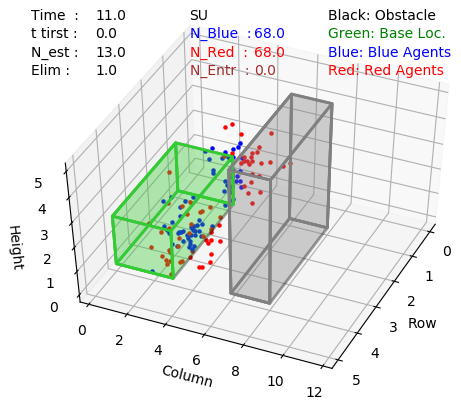}
\end{minipage}
\begin{minipage}[t]{0.24\textwidth}
\centering
\includegraphics[width=0.95\textwidth]{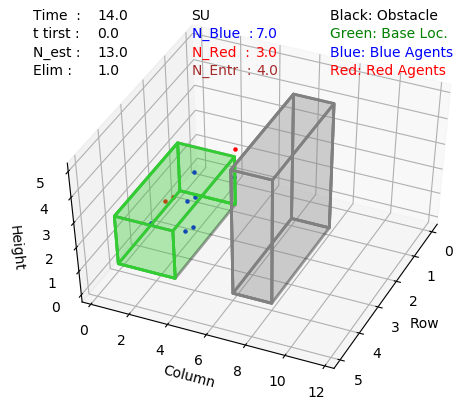}
\end{minipage}
\caption{Representation of the blue and red swarm on a 3D grid-map for several time instances when blue swarm performs boundary bins strategy given in Section \ref{sec:doct} to eliminate red swarm. Blue swarm updated its desired distribution after each elimination step. This choice is introduced as Option 2 in Section \ref{sec:doct}.}
\label{fig:3d_true}
\end{figure*}

In the third and fourth scenarios, other simulation results are given for a 3D grid-map, which consists of $5 \times 12  \times 5$ bins. The grid-map contains a $3 \times 3 \times 2$ base location on one side. A number of two hundreds of blue and red agents are initialized at the base location and opposite plane of the map, respectively. There is a $3 \times 2 \times 5$ obstacle in the middle of the grid-map. In the third scenario, Option 1 that is given in Section \ref{sec:doct} is used as the second scenario. In the fourth scenario, Option 2 is used as the third scenario. Simulation results are shown in the Figure \ref{fig:3d_false} and \ref{fig:3d_true} for the different options, respectively. As can be seen in Table \ref{table:2d_3d_false_true}, the desired total variation to converge projection of the distribution of the red swarm on determined boundary bins is determined as $0.1$. The total variation is estimated as $0.066$ after boundary bins are chosen. Since there are $200$ blue and red agents, the number of red agents that will enter the base location is estimated as $13$ using the Eq. (\ref{eq:uptv_eqNum}). The number of red agents that enter the base location is $21$ according to simulation results when Option 1 is used. This difference caused by the stochastic policy of the red swarm. Projection of the distribution of the red swarm is estimated using the Markov chain of the red swarm but the Markov chain is a stochastic policy and this situation causes a difference between the estimated and real distributions. When the desired distribution of the blue swarm is updated after each elimination step as Option 2, this difference dramatically decreases. The number of red agents that enter the base location is showed as $7$, since the reduced effect of stochasticity, and blue swarm continues to converge after the first contact time.

\section{Conclusion and Future Works}\label{sec:conclusion}
In this paper, swarm-to-swarm engagement problem is formulated and the dynamics of swarms are introduced. To the best of our knowledge, this work is the first approach to swarm-to-swarm engagement problem. When the purpose of the red swarm is to converge to a stationary desired distribution corresponds to a defended base location, the purpose of the blue swarm is to converge to the time-varying distribution and eliminate desired number of agents of the red swarm furthest from the base location. The strategy is based on convergence to the projection of the red swarm on any determined boundary bins. It is proved that the number of red agents that enter the defended base location can be bounded with respect to determination of the boundary bins.

There are lots of ideas about future works. Some of the are listed below.

\begin{itemize}

  \item It can be worked on optimal algorithms and their theoretical analysis that can converge to the time-varying distribution when properties such as elimination are included.
  
  \item Elimination process is assumed to be deterministic in this paper. Blue and red agents that are in the same bin eliminate each other in equal numbers. It can also be a random variable that varies with the ratio of the number of these blue and red agents.
  
   \item Learning methods can be used to estimate the policy of red swarm when the Markov matrix of the red swarm is unknown.

  \item Game theoretic methods can be used to make the blue swarm try to eliminate the red swarm as soon as possible while the red swarm tries to enter the base location without getting caught by the blue swarm.

  \item When agents belonging to the blue and red swarm enter the same bin, a game theoretic convexified model predictive control algorithm can be started for blue agents to catch red agents in the same continuous bin space.
  
\end{itemize}

\section{ACKNOWLEDGEMENTS}
The authors gratefully acknowledge Behçet Açıkmeşe from University of Washington for his valuable comments and feedback.

\bibliographystyle{IEEEtran}
\bibliography{main.bib}

\begin{thebibliography}{10}
\providecommand{\url}[1]{#1}
\csname url@samestyle\endcsname
\providecommand{\newblock}{\relax}
\providecommand{\bibinfo}[2]{#2}
\providecommand{\BIBentrySTDinterwordspacing}{\spaceskip=0pt\relax}
\providecommand{\BIBentryALTinterwordstretchfactor}{4}
\providecommand{\BIBentryALTinterwordspacing}{\spaceskip=\fontdimen2\font plus
\BIBentryALTinterwordstretchfactor\fontdimen3\font minus
  \fontdimen4\font\relax}
\providecommand{\BIBforeignlanguage}[2]{{%
\expandafter\ifx\csname l@#1\endcsname\relax
\typeout{** WARNING: IEEEtran.bst: No hyphenation pattern has been}%
\typeout{** loaded for the language `#1'. Using the pattern for}%
\typeout{** the default language instead.}%
\else
\language=\csname l@#1\endcsname
\fi
#2}}
\providecommand{\BIBdecl}{\relax}
\BIBdecl

\bibitem{DARPA}
\BIBentryALTinterwordspacing
C.~Timothy, ``Offensive swarm-enabled tactics (offset),'' 2016 (accessed June
  15, 2020). [Online]. Available:
  \url{https://www.darpa.mil/program/offensive-swarm-enabled-tactics}
\BIBentrySTDinterwordspacing

\bibitem{lumelsky1997decentralized}
V.~J. Lumelsky and K.~Harinarayan, ``Decentralized motion planning for multiple
  mobile robots: The cocktail party model,'' \emph{Autonomous Robots}, vol.~4,
  no.~1, pp. 121--135, 1997.

\bibitem{richards2002spacecraft}
A.~Richards, T.~Schouwenaars, J.~P. How, and E.~Feron, ``Spacecraft trajectory
  planning with avoidance constraints using mixed-integer linear programming,''
  \emph{Journal of Guidance, Control, and Dynamics}, vol.~25, no.~4, pp.
  755--764, 2002.

\bibitem{tillerson2002co}
M.~Tillerson, G.~Inalhan, and J.~P. How, ``Co-ordination and control of
  distributed spacecraft systems using convex optimization techniques,''
  \emph{International Journal of Robust and Nonlinear Control: IFAC-Affiliated
  Journal}, vol.~12, no. 2-3, pp. 207--242, 2002.

\bibitem{scharf2003survey}
D.~P. {Scharf}, F.~Y. {Hadaegh}, and S.~R. {Ploen}, ``A survey of spacecraft
  formation flying guidance and control (part 1): guidance,'' in
  \emph{Proceedings of the 2003 American Control Conference, 2003.}, vol.~2,
  2003, pp. 1733--1739.

\bibitem{kim2004multiple}
Y.~Kim, M.~Mesbahi, and F.~Y. Hadaegh, ``Multiple-spacecraft reconfiguration
  through collision avoidance, bouncing, and stalemate,'' \emph{Journal of
  Optimization Theory and Applications}, vol. 122, no.~2, pp. 323--343, 2004.

\bibitem{ramirez2010distributed}
J.~L. Ramirez-Riberos, M.~Pavone, E.~Frazzoli, and D.~W. Miller, ``Distributed
  control of spacecraft formations via cyclic pursuit: Theory and
  experiments,'' \emph{Journal of Guidance, Control, and Dynamics}, vol.~33,
  no.~5, pp. 1655--1669, 2010.

\bibitem{elamvazhuthi2016coverage}
K.~Elamvazhuthi, C.~Adams, and S.~Berman, ``Coverage and field estimation on
  bounded domains by diffusive swarms,'' in \emph{2016 IEEE 55th Conference on
  Decision and Control (CDC)}.\hskip 1em plus 0.5em minus 0.4em\relax IEEE,
  2016, pp. 2867--2874.

\bibitem{zhao2011density}
S.~Zhao, S.~Ramakrishnan, and M.~Kumar, ``Density-based control of multiple
  robots,'' in \emph{Proceedings of the 2011 American control
  conference}.\hskip 1em plus 0.5em minus 0.4em\relax IEEE, 2011, pp. 481--486.

\bibitem{eren2017velocity}
U.~Eren and B.~A{\c{c}}{\i}kme{\c{s}}e, ``Velocity field generation for density
  control of swarms using heat equation and smoothing kernels,''
  \emph{IFAC-PapersOnLine}, vol.~50, no.~1, pp. 9405--9411, 2017.

\bibitem{accikmecse2012markov}
B.~A{\c{c}}{\i}kme{\c{s}}e and D.~S. Bayard, ``A markov chain approach to
  probabilistic swarm guidance,'' in \emph{2012 American Control Conference
  (ACC)}.\hskip 1em plus 0.5em minus 0.4em\relax IEEE, 2012, pp. 6300--6307.

\bibitem{accikmecse2015markov}
------, ``Markov chain approach to probabilistic guidance for swarms of
  autonomous agents,'' \emph{Asian Journal of Control}, vol.~17, no.~4, pp.
  1105--1124, 2015.

\bibitem{demir2014density}
N.~Demir, B.~A{\c{c}}{\i}kme{\c{s}}e, and C.~Pehlivant{\"u}rk, ``Density
  control for decentralized autonomous agents with conflict avoidance,''
  \emph{IFAC Proceedings Volumes}, vol.~47, no.~3, pp. 11\,715--11\,721, 2014.

\bibitem{demir2015decentralized}
N.~Demir, U.~Eren, and B.~A{\c{c}}{\i}kme{\c{s}}e, ``Decentralized
  probabilistic density control of autonomous swarms with safety constraints,''
  \emph{Autonomous Robots}, vol.~39, no.~4, pp. 537--554, 2015.

\bibitem{bandyopadhyay2014probabilistic}
S.~Bandyopadhyay, S.-J. Chung, and F.~Y. Hadaegh, ``Probabilistic swarm
  guidance using optimal transport,'' in \emph{2014 IEEE Conference on Control
  Applications (CCA)}.\hskip 1em plus 0.5em minus 0.4em\relax IEEE, 2014, pp.
  498--505.

\bibitem{bandyopadhyay2017probabilistic}
------, ``Probabilistic and distributed control of a large-scale swarm of
  autonomous agents,'' \emph{IEEE Transactions on Robotics}, vol.~33, no.~5,
  pp. 1103--1123, 2017.

\bibitem{jang2018local}
I.~Jang, H.-S. Shin, and A.~Tsourdos, ``Local information-based control for
  probabilistic swarm distribution guidance,'' \emph{Swarm Intelligence},
  vol.~12, no.~4, pp. 327--359, 2018.

\bibitem{uzun2021-1}
S.~Uzun and N.~K. Üre, ``A decentralized state-dependent markov chain
  synthesis for swarm guidance,'' 2021.

\bibitem{uzun2021-2}
------, ``A density diffusion based markov chain synthesis for swarm
  guidance,'' 2021.

\bibitem{uzun2021-3}
------, ``Convergence to probability distributions with disconnected parts for
  swarm guidance,'' 2021.

\bibitem{horn2012matrix}
R.~A. Horn and C.~R. Johnson, \emph{Matrix analysis}.\hskip 1em plus 0.5em
  minus 0.4em\relax Cambridge university press, 2012.

\bibitem{mesbahi2010graph}
M.~Mesbahi and M.~Egerstedt, \emph{Graph theoretic methods in multiagent
  networks}.\hskip 1em plus 0.5em minus 0.4em\relax Princeton University Press,
  2010.

\bibitem{levin2017markov}
D.~A. Levin and Y.~Peres, \emph{Markov chains and mixing times}.\hskip 1em plus
  0.5em minus 0.4em\relax American Mathematical Soc., 2017, vol. 107.

\end{thebibliography}

\end{document}